\renewcommand*\subjclass[2][2000]{%
  \def\@subjclass{#2}%
  \@ifundefined{subjclassname@#1}{%
    \ClassWarning{\@classname}{Unknown edition (#1) of Mathematics
      Subject Classification; using '1991'.}%
  }{%
    \@xp\let\@xp\subjclassname\csname subjclassname@#1\endcsname
  }%
}
\newtheorem{theorem}{Theorem}[section]
\newtheorem{lemma}[theorem]{Lemma}
\newtheorem{corollary}[theorem]{Corollary}
\newtheorem{proposition}[theorem]{Proposition}
\theoremstyle{definition}
\newtheorem{definition}[theorem]{Definition}
\newtheorem{remark}[theorem]{Remark}
\numberwithin{equation}{section}
\begin{document}
\title[On quasiconformal mappings and elliptic PDE in the plane]{On quasiconformal selfmappings of the unit disk and elliptic PDE in the plane}

\subjclass{Primary 30C55; Secondary 30C05}


\keywords{Quasiconformal maps, Beltrami equation, Elliptic PDE,
Lipschitz condition}
\author{David Kalaj}
\address{ Faculty of Natural Sciences and
Mathematics, University of Montenegro, Dzordza Vasingtona b.b. 81000
Podgorica, Montenegro} \email{davidk@ac.me} \subjclass {Primary
30C55, Secondary 31C05}
\begin{abstract}
We prove the following theorem: if $w$ is a quasiconformal mapping
of the unit disk onto itself satisfying elliptic partial
differential inequality $|L[w]|\le \mathcal{B}|\nabla w|^2+\Gamma$,
then $w$ is Lipschitz continuous. This { result} extends some recent
results, where instead of an elliptic differential operator is {
only} considered { the} Laplace operator.
\end{abstract} \maketitle

\section{Introduction and notation}
\subsection{Quasiconformal mappings}
Let $A=\begin{pmatrix}
  a^{11} & a^{12} \\
  a^{21} & a^{22}
\end{pmatrix}.$ We will consider the
matrix norm:  $$|A|=\max\{|Az|: z\in \mathbf R^2, |z|=1\}\
$$ and the matrix function $$l(A)=\min\{|Az|: z\in \mathbf R^2, |z|=1\}.$$  Let $D$ and $\Omega$ be subdomains of the complex plane
$\mathbf C$, and $w=u+iv:D\to \Omega$ be a function that has both
partial derivatives at a point $z\in D$. By $\nabla w(z)$ we denote
the matrix $\begin{pmatrix}
  u_{x} & u_{y} \\
  v_{x} & v_{y}
\end{pmatrix}.$ For the matrix $\nabla
w$ we have \begin{equation}\label{opernorm}|\nabla w|=|\partial
w|+|\bar \partial w|\end{equation} and
\begin{equation}\label{ll}l(\nabla w)=||\partial
w|-|\bar \partial w||,\end{equation} where
$$\partial w=w_z := \frac{1}{2}\left(w_x+\frac{1}{i}w_y\right)\text{ and } \bar \partial w=w_{\bar z} := \frac{1}{2}\left(w_x-\frac{1}{i}w_y\right).$$
We say that a function $u:D\to \mathbf R$ is ACL (absolutely
continuous on lines) in  the region $D$, if for every closed
rectangle $R\subset D$ with sides parallel to the $x$ and $y$-axes,
$u$ is absolutely continuous on a.e. horizontal and a.e. vertical
line in $ R$. Such a function has of course, partial derivatives
$u_x$, $u_y$ a.e. in $D$.

A sense-preserving homeomorphism  $w\colon D\to \Omega, $ where $D$
and $\Omega$ are subdomains of the complex plane $\mathbf C,$ is
said to be $K$-quasiconformal ($K$-q.c), { with} $K\ge 1$, if $w$ is
ACL in $D$ in the sense that the real and imaginary part are ACL in
D, and
\begin{equation}\label{defqc} |\nabla w|\le K
l(\nabla w)\ \ \ \text{a.e. on $D$},\end{equation} (cf. \cite{Ahl},
pp. 23--24). Notice that the condition (\ref{defqc}) can be written
as
$$|w_{\bar z}|\le k|w_z|\quad \text{a.e. on $D$ where
$k=\frac{K-1}{K+1}$ i.e. $K=\frac{1+k}{1-k}$ }.$$ If in the previous
definition { we} replace the condition "$w$ is  a sense-preserving
homeomorphism" by the condition "$w$ is continuous", { then} we
obtain the definition of a quasiregular mapping.
\subsection{Elliptic operator}
Let $A(z)= \{a^{ij}(z)\}_{i,j=1}^2$ be a symmetric  matrix function
defined in a domain $D\subset \mathbf C$ $(a^{ij}= a^{ji})$. Assume
that
\begin{equation}\label{(2.11)}\Lambda^{-1} \le \left<A(z)h,h\right>\le \Lambda \ \ \ \text{
for} \ \ \ |h| = 1,\end{equation} where $\Lambda$ is a constant $\ge
1$ or { written} in coordinates
\begin{equation}\label{(2.1)}\Lambda^{-1} \le \sum_{i,j=1}^2 a^{ij}(z)h_i h_j \le \Lambda \text{
for} \sum_{i=1}^2 h_i^2 = 1.\end{equation} In addition for a certain
$\mathfrak L\ge 0$, we { suppose} that
\begin{equation}\label{con2} |A(z)-A(\zeta)|\le \mathfrak L|\zeta-z|   \ \
\text{ for any \ \  $z,\zeta\in D$.}\end{equation} For
\begin{equation}\label{elli}L[u] := \sum_{i,j=1}^2 a^{ij}(z)
D_{ij}u(z),\end{equation} subjected to conditions \eqref{(2.1)} and
\eqref{con2} we consider the following differential inequality
\begin{equation}\label{(0)}|L[u]|\le \mathcal{B}|\nabla u|^2+\Gamma,\end{equation} with given $\mathcal{B}$, $\Gamma\ge 0$, or, by using Einstein
convention
\begin{equation}\label{short}|a^{ij}(z)D_{ij}u| \le \mathcal{B}|\nabla
u|^2+\Gamma,\end{equation} and call it {\it elliptic partial
differential inequality.} Observe that, if $A$ is the identity
matrix, then  $L$ is the Laplace operator $\Delta$. A $C^2$
solutions $u: D\to \mathbf R(\mathbf C)$ of the equation $\Delta u =
0$ is called a harmonic function ({ mapping}) and the corresponding
inequality \eqref{elli} is called {\it Poisson differential
inequality}. The class of harmonic quasiconformal mappings (HQC) has
been one of recent main topics of investigation of some authors. See
the subsection below. For the connection between quasiconformal
mappings and PDE we refer to the book \cite{ar1}. See also
\cite[Chapter~12]{gt}, \cite{robfin}, \cite{nir} and \cite{ls}.
\subsection{Background and statement of the main result}
Let $\gamma$ be a Jordan curve. By the Riemann mapping theorem,
there exists a Riemann conformal mapping of the unit disk onto a
Jordan domain $\Omega=\mathrm{int}\ \gamma$. By Caratheodory's
theorem, it has a continuous extension to the boundary. Moreover, if
$\gamma\in C^{1,\alpha}$, $0< \alpha <1$, then the Riemann conformal
mapping has $C^{1,\alpha}$ extension to the boundary (this result is
known as Kellogg's theorem). We refer to \cite{G} for the proof of
the previous result and \cite{PW, POW, Ko, Lw} for related results.
In particular a conformal mapping $w$ of the unit disk onto a Jordan
domain $\Omega$ with $C^{1,\alpha}$ boundary is Lipschitz
continuous, i.e. it satisfies the inequality $|w(z)-w(z')|\le
C|z-z'|$, $z,z'\in\mathbf U:=\{z\in\mathbf C: |z|<1\}$.

On the other hand $K$ quasiconformal mappings between smooth domains
are H\"older continuous and the best H\"older constant is $1/K$. So
they are not in general Lipschitz mappings, except if $K=1$. In this
paper we are concerned with an additional condition of a
quasiconformal mapping in order to guaranty its global Lipschitz
character.

One of "additional condition" is to assume harmonicity of the
mapping. This condition is natural since conformal mappings are
quasiconformal and harmonic. Hence,  quasiconformal harmonic
(shortly HQC) mappings are natural generalization of conformal
mappings.
O. Martio \cite{Om} was the first who considered harmonic
quasiconformal mappings on the complex plane.

Recently, there has been a number of authors who are working on {
this} topic.
We list below some of related results:

1) If $w$ is harmonic quasiconformal mapping of the unit disk onto
itself, then $w$ is Lipschitz (Pavlovic theorem proved in
\cite{MP}). See also some refinements of Partyka and Sakan
\cite{pk}.

2) If $w$  is a harmonic quasiconformal mapping between two
$C^{1,\alpha}$ Jordan domains, then $w$ is Lipschiz (the result of
the author  proved in \cite{mathz}).

3) If $w$ is a quasiconformal mapping between two $C^{2,\alpha}$
Jordan domains satisfying the partial differential inequality
$|\Delta w|\le C |f_z f_{\bar z}|,$ then $w$ is Lipschitz (the
author $\&$ Mateljevi\'c result proved in \cite{km}).

4) If $w$ is a quasiconformal mapping of the unit disk onto itself
satisfying the PDE  $\Delta w= g$ then this mapping is Lipschiz (the
author $\&$ Pavlovi\'c result proved in \cite{trans}).

5) If $w$ is a quasiconformal mapping between two $C^{2,\alpha}$
Jordan domains satisfying the partial differential inequality
$|\Delta w|\le \mathcal B |\nabla w|^2 + \Gamma$, then $w$ is
Lipschiz (the author $\&$ Mateljevi\'c result proved in
\cite{pota}).

Notice that the proofs of 3)--5) depend on a Heinz theorem, see
\cite{EH}.

Concerning the bi-Lipschitz character of the class HQC we refer to
the papers \cite{pisa}, \cite{kalann}, \cite{kojic}, \cite{MMM} and
\cite{xa}. See also \cite{kalmat} and \cite{mv2} for some results
concerning higher dimensional case.

For related result about quasiconformal harmonic mappings with
respect to the hyperbolic metric we refer to the paper of Wan
\cite{wan} and of Markovi\'c \cite{markovic}.

{ More} recently, Iwaniec, Kovalev and Onninen in \cite{nit} have
shown that the class of quasiconformal harmonic mappings is also
interesting concerning the modulus of annuli in complex plane.

In this paper we study Lipschitz continuity of the class of $K$-q.c.
self-mappings of the unit disk satisfying elliptic differential
inequality $|Lw| \le \mathcal B|\nabla w|^2+\Gamma$. This class
contains conformal mappings and quasiconformal harmonic mappings.

The main result of this paper is the following theorem which is an
extension of results 1)-- 5) mentioned above.
\begin{theorem}\label{mama}
If $a\in\mathbf U$, and $w:\mathbf U\to \mathbf U$, $w(a)=0$ and
$w(\mathbf U)=\mathbf{U}$ is a $K$ q.c. solution of the elliptic
partial differential inequality
\begin{equation}\label{ii}|L[w]|\le \mathcal B|\nabla
w|^2+\Gamma,\end{equation} then $\nabla w$ is bounded by a constant
$C(K,\mathcal B,\Gamma,\Lambda, \mathfrak L, a)$ and $w$ is {
Lipschitz} continuous.
\end{theorem}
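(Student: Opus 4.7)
The plan is to reduce the general elliptic setting to the Poisson case already handled in \cite{pota} (result 5 from the list above) via a quasiconformal change of the independent variable that flattens $A(z)$ to the identity. In dimension two this reduction is classical: one associates to $A(z)$ a Beltrami coefficient $\mu_A(z)$ built from the entries of $A$, solves the Beltrami equation $\Phi_{\bar z}=\mu_A(z)\,\Phi_z$ on $\mathbf U$, and checks that for any $C^2$ function $u$ the composition $v=u\circ\Phi^{-1}$ satisfies
\begin{equation*}
\Delta v(\zeta)=\alpha(\zeta)\,L[u](\Phi^{-1}(\zeta))+\langle \beta(\zeta),\nabla v(\zeta)\rangle
\end{equation*}
in $\Phi(\mathbf U)$, with $\alpha$ and $\beta$ bounded in terms of $\Lambda$ and $\mathfrak L$. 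The ellipticity bound \eqref{(2.11)} forces $|\mu_A|\le k_0<1$, the Lipschitz hypothesis \eqref{con2} makes $\mu_A$ Lipschitz, and therefore $\Phi$ is a $K_0$-quasiconformal $C^{1,\alpha}$ diffeomorphism of $\mathbf U$ onto a Jordan domain $\Omega=\Phi(\mathbf U)$.

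Setting $W=w\circ\Phi^{-1}:\Omega\to\mathbf U$, the composition is $KK_0$-quasiconformal with $W(\Phi(a))=0$. Inserting the differential inequality \eqref{ii} into the displayed identity and using the bi-Lipschitz bound $|\nabla u|\le C|\nabla v|$, one obtains a Poisson-type inequality
\begin{equation*}
|\Delta W(\zeta)|\le \mathcal B'|\nabla W(\zeta)|^2+\Gamma'
\end{equation*}
on $\Omega$, with $\mathcal B'$ and $\Gamma'$ depending only on $\mathcal B$, $\Gamma$, $\Lambda$ and $\mathfrak L$. Thus $W$ is a quasiconformal map between two sufficiently smooth Jordan domains satisfying a Poisson differential inequality, and result 5) from \cite{pota} yields a Lipschitz constant for $W$. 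Since $\Phi$ is a $C^{1,\alpha}$ diffeomorphism of $\overline{\mathbf U}$, $w=W\circ\Phi$ inherits the Lipschitz property with the quantitative dependence asserted in the theorem. The dependence on $a$ can be absorbed by precomposing $w$ with a Möbius automorphism of $\mathbf U$ sending $0$ to $a$; this alters $A$ by a smooth change of coordinates and preserves the structural hypotheses with constants controlled by $|a|$.

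The main obstacle is the first step, namely the clean reduction to the Laplacian. Three technicalities need care: (i) choosing $\mu_A$ so that the leading symbol of $L$ is mapped precisely to the Laplacian and not merely to some constant-coefficient elliptic operator; (ii) absorbing the first-order terms generated by the chain rule into the $|\nabla W|^2$ part of the inequality or into the constant $\Gamma'$ via the elementary estimate $|\beta|\,|\nabla W|\le \tfrac12|\nabla W|^2+\tfrac12|\beta|^2$; and (iii) verifying that the boundary regularity of $\Omega=\Phi(\mathbf U)$ meets the hypothesis of \cite{pota}. Since Lipschitz regularity of $A$ yields only $C^{1,\alpha}$ regularity of $\partial\Omega$ (any $\alpha<1$), one either invokes a mild extension of result 5) allowing $C^{1,\alpha}$ source boundary, or approximates $A$ by smooth matrix fields $A_n$ and passes to the limit using a priori estimates whose constants depend only on $K$, $\mathcal B$, $\Gamma$, $\Lambda$, $\mathfrak L$ and $a$. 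Once these points are settled the argument reduces to previously established material.
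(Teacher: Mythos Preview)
Your reduction strategy is natural, but the first step has a genuine gap that the three acknowledged technicalities do not cover. With $A$ merely Lipschitz, the Beltrami coefficient $\mu_A$ is only Lipschitz, so the normalising map $\Phi$ lies in $C^{1,\alpha}$ (equivalently $W^{2,p}_{\mathrm{loc}}$ for all finite $p$) but is \emph{not} $C^{2}$. The first-order coefficient $\beta$ in your displayed identity is $a^{ij}D_{ij}\Phi^{k}$, i.e.\ it involves second derivatives of $\Phi$; hence $\beta$ is in every $L^{p}$ but is not bounded. Your absorption $|\beta|\,|\nabla W|\le \tfrac12|\nabla W|^{2}+\tfrac12|\beta|^{2}$ therefore produces an inhomogeneous term $\Gamma'$ that is only in $L^{p}$, whereas result~5) of \cite{pota} is stated for constant $\Gamma$. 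The approximation escape you propose does not close this: if you smooth $A$ to $A_n$, then $|L_n[w]-L[w]|\le \|A_n-A\|_\infty\,|\nabla^{2}w|$, and you have no control on $|\nabla^{2}w|$ --- that is essentially what is to be proved --- so $w$ need not satisfy the inequality for $L_n$. Extending result~5) simultaneously to $C^{1,\alpha}$ source boundary and $L^{p}$ inhomogeneity would require reworking its proof in substance, not merely citing it.

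By contrast, the paper never performs a global change of the independent variable. It works directly with $L$: a pointwise (frozen-coefficient) linear change of coordinates yields interior estimates for $|\nabla w|$ in terms of the modulus of continuity (Theorem~\ref{fid}), and Mori's theorem supplies that modulus depending only on $K$ and $a$. Near the boundary the paper passes to the scalar function $\rho=|w|$, shows via a direct computation (Lemma~\ref{meli}) that $|L[\rho]|\le (\Theta/\rho)|\nabla\rho|^{2}+\Gamma$, subtracts a smooth extension $g$ equal to $1$ near $|z|=1$, and applies Nagumo's a~priori bound (Theorem~\ref{aprio}) to $\rho-g$ on a thin annulus; Mori's theorem is used again to make the condition \eqref{nagumo} hold. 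Finally $|\nabla w|\le K|\nabla\rho|$ by quasiconformality. Because only pointwise linear transformations of the Hessian are used, the argument never needs $C^{2}$ regularity of any global coordinate change, which is exactly where your approach stalls under the minimal Lipschitz hypothesis on $A$.
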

\begin{remark}
The condition \eqref{ii} is in \cite[p.~179-180]{gia} called as
\emph{natural grow condition}. The result is new even for $\mathcal
B=\Gamma=0$ i.e. for q.c. solution to elliptic PDE with Lipschitz
coefficients.
\end{remark}
The proof of Theorem~\ref{mama} is given in Section~3. The methods
of the proof differ from the methods of the proof of corresponding
results for the class HQC. In Section~2 we make some estimates
concerning the Green function of the disk, and some estimates
concerning the gradient of a solution to elliptic partial
differential inequality, satisfying certain boundary condition
similar to those { in} the paper of Nagumo \cite{nagumo}. We first
prove interior estimates { for} the gradient of a solution $u$ of
elliptic PDE in terms of constants of the elliptic operator, and
modulus of continuity of $u$ (Theorem~\ref{fid}). After that we
recall a theorem of Nagumo (\cite{nagumo}), which shows that if $u$
is a solution of elliptic PDE, with vanishing boundary condition
defined in a domain $D$ whose boundary has { a} bounded curvature
from above by a constant $\kappa$, then $|\nabla u(z)|\le \gamma ,\
\ z\in D$, where $\gamma$ is a constant  not depending on $u$
providing that $64 \mathcal B \Gamma \|u\|_\infty <\pi$
(Theorem~\ref{aprio}). In order to prove Theorem~\ref{mama}, we
previously show that the function $u = |w|$ satisfies a certain
elliptic differential inequality near the boundary of the unit disk.
In order to show a priory bound, we make use of Mori's theorem which
implies that the modulus of continuity of a $K$-q.c. self-mapping of
the unit disk depends only on $K$. By using Theorem~\ref{fid}, we
show that the gradient is a priory bounded { on} compacts of the
unit disk, while Theorem~\ref{aprio} serves to obtain the a priory
bound of { the} gradient of $u$ in some "neighborhood" of the
boundary of the unit disk. By using the quasiconformality, we prove
that $\nabla w$ is a priory bounded as well.
\section{Auxiliary results}\subsection{Green function}
If $h(z,w)$ is a real function, then by $\nabla_z h$ we denote the
gradient $(h_x,h_y)$.
\begin{lemma}
{ If} $$h(z,w) = \log \frac{|1-z\bar{w}|}{|z-w|},$$ then
\begin{equation}\label{oneeq}\nabla_z h(z,w) = \frac{1-|w|^2}{(\bar z- \bar w)( w\bar
z-1)}\end{equation} and
\begin{equation}\label{twoeq}\partial_{w}\nabla_z h(z,w)=-\frac{1}{(1-w\bar z)^2},\
\  \partial_{\bar w}\nabla_zh(z,w)=-\frac{1}{(\bar w-\bar
z)^2}.\end{equation}
\end{lemma}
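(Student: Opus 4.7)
The plan is to compute everything directly from the defining expression for $h$. Writing $h(z,w) = \tfrac{1}{2}\log\bigl[(1-z\bar w)(1-\bar z w)\bigr] - \tfrac{1}{2}\log\bigl[(z-w)(\bar z-\bar w)\bigr]$ and treating $w$ as a parameter, I compute the Wirtinger derivatives $\partial_z h$ and $\partial_{\bar z} h$ term by term. Since $h$ is real-valued, the identification $\nabla_z h = h_x + ih_y = 2\,\partial_{\bar z} h$ lets me view $\nabla_z h$ as a complex scalar, matching the form of the right-hand side of \eqref{oneeq}.

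To establish \eqref{oneeq} I first obtain the partial-fraction form
$$\nabla_z h \;=\; 2\,\partial_{\bar z} h \;=\; -\frac{w}{1-\bar z w}\;-\;\frac{1}{\bar z-\bar w},$$
and then combine over the common denominator $(1-\bar z w)(\bar z-\bar w)$:
$$\nabla_z h \;=\; -\frac{w(\bar z-\bar w)+(1-\bar z w)}{(1-\bar z w)(\bar z-\bar w)} \;=\; -\frac{1-|w|^2}{(1-\bar z w)(\bar z-\bar w)} \;=\; \frac{1-|w|^2}{(\bar z-\bar w)(w\bar z-1)},$$
using $w\bar z - w\bar w + 1 - \bar z w = 1 - |w|^2$ and a sign flip.

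For \eqref{twoeq} I differentiate the partial-fraction form with respect to $w$ and $\bar w$ separately. The $w$-dependence is carried entirely by $-\tfrac{w}{1-\bar z w}$, which yields
$$\partial_w \nabla_z h \;=\; -\frac{(1-\bar z w)+w\bar z}{(1-\bar z w)^2} \;=\; -\frac{1}{(1-w\bar z)^2},$$
while the $\bar w$-dependence lives entirely in $-\tfrac{1}{\bar z-\bar w}$, producing
$$\partial_{\bar w}\nabla_z h \;=\; -\frac{1}{(\bar z-\bar w)^2} \;=\; -\frac{1}{(\bar w-\bar z)^2}.$$

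The only even mildly subtle point is the identification of the vector $\nabla_z h$ with the complex scalar $2\,\partial_{\bar z} h$; the rest is routine algebra and I expect no genuine obstacle in carrying out the proof.
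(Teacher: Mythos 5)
Your proposal is correct and follows essentially the same route as the paper: identifying $\nabla_z h$ with $2\,\partial_{\bar z}h$, computing the $\bar z$-derivative of the logarithm, and then using the partial-fraction form $\frac{w}{w\bar z-1}+\frac{1}{\bar w-\bar z}$ (equivalently your $-\frac{w}{1-\bar z w}-\frac{1}{\bar z-\bar w}$) to read off the $w$- and $\bar w$-derivatives in \eqref{twoeq}. The only difference is cosmetic ordering — you obtain the partial fractions directly from the log decomposition rather than after assembling \eqref{oneeq} — so no further comment is needed.
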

\begin{proof} First of all
$$\nabla_z h = (h_x,h_y)=h_x + ih_y.$$
Since
$$h_{\bar z} = \frac{1}{2}(h_x + i
h_y),$$ it follows that $$\nabla_z h = 2h_{\bar z}.$$ Since $$2h(z)
= \log \left(\frac{1-z\bar{w}}{z-w}\frac{1-\bar{z}{w}}{\bar{z}-\bar
w}\right),$$ {by differentiating} we obtain $$2h_{\bar z}(z)=
\log\left(\frac{1-\bar{z}{w}}{\bar{z}-\bar w}\right)_{\bar
z}=\frac{|w|^2-1}{(\bar z-\bar w)^2}\frac{\bar{z}-\bar
w}{{1-\bar{z}{w}}}.$$ This implies \eqref{oneeq}. From
$$\frac{1-|w|^2}{(\bar z- \bar w)( w\bar z-1)}=\frac{w}{w\bar
z-1}+\frac{1}{\bar w-\bar z}$$ it follows \eqref{twoeq}.
\end{proof}
\begin{corollary}  Let $G(\zeta, \omega)$ be the Green
function of the disk $\{\zeta:|\zeta-\zeta_0|\le R\}$ defined by
$$G(\zeta, \omega):=\log
\frac{|\varphi(\zeta)-\varphi(\omega)|}{|1-\varphi(\zeta)\overline{\varphi(\omega)}|},$$
where $$\varphi(\zeta) = \frac{1}{R}(\zeta-\zeta_0).$$ Then
\begin{equation}\label{first}|\nabla_{\zeta}G(\zeta,\omega)|\le \frac{2}{|\zeta-\omega|}\end{equation} and
\begin{equation}\label{sec}|\partial_{\omega_j}\nabla_{\zeta}G(\zeta,\omega)|\le
\frac{2}{|\zeta-\omega|^2}, j=1,2,\end{equation} where
$\omega=\omega_1+i\omega_2,$ $\omega_1,\omega_2\in\mathbf{R}$.
\end{corollary}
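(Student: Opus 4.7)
The plan is to reduce the corollary to the preceding lemma via the biholomorphism $\varphi(\zeta)=(\zeta-\zeta_0)/R$, which carries the disk $\{|\zeta-\zeta_0|<R\}$ conformally onto the unit disk. Writing $z=\varphi(\zeta)$ and $w=\varphi(\omega)$, we have
$$G(\zeta,\omega)=-h(\varphi(\zeta),\varphi(\omega)),$$
so everything will follow by chain rule from formulas \eqref{oneeq} and \eqref{twoeq}, together with two elementary estimates on the unit disk.

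First I would carry out the chain rule. Since $\varphi$ is holomorphic with $\varphi'\equiv 1/R$, one gets
$$\nabla_{\zeta}G(\zeta,\omega)=-\tfrac{1}{R}\,(\nabla_z h)(z,w),\qquad \partial_{\omega_j}\nabla_{\zeta}G(\zeta,\omega)=-\tfrac{1}{R^2}\bigl(\partial_w\nabla_z h\pm \partial_{\bar w}\nabla_z h\bigr)(z,w),$$
where the sign inside the bracket depends on whether $j=1$ or $j=2$; in either case each coefficient has modulus $1/R$, so the triangle inequality bounds $|\partial_{\omega_j}\nabla_\zeta G|$ by $R^{-2}(|\partial_w\nabla_z h|+|\partial_{\bar w}\nabla_z h|)$.

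Next I would establish the two estimates on the unit disk that do the work. From \eqref{oneeq},
$$|\nabla_z h(z,w)|=\frac{1-|w|^2}{|z-w|\,|1-w\bar z|},$$
and since $|1-w\bar z|\ge 1-|w|\ge (1-|w|^2)/2$ on $\mathbf U$, we conclude $|\nabla_z h(z,w)|\le 2/|z-w|$. Multiplying by $1/R$ and using $|z-w|=|\zeta-\omega|/R$ yields \eqref{first}. For \eqref{sec}, \eqref{twoeq} gives $|\partial_w\nabla_z h|=|1-w\bar z|^{-2}$ and $|\partial_{\bar w}\nabla_z h|=|z-w|^{-2}$; the key inequality is $|1-w\bar z|\ge|z-w|$ on $\mathbf U\times\mathbf U$, which follows from the algebraic identity
$$|1-w\bar z|^2-|z-w|^2=(1-|z|^2)(1-|w|^2)\ge 0.$$
Thus both second-derivative terms are bounded by $1/|z-w|^2$, and combining with the chain rule gives the factor $2/R^2\cdot R^2/|\zeta-\omega|^2=2/|\zeta-\omega|^2$, which is \eqref{sec}.

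There is really no serious obstacle here; the statement is just the lemma transported to a disk of arbitrary centre and radius, and the only nonroutine ingredient is the Schwarz-type identity $|1-w\bar z|^2-|z-w|^2=(1-|z|^2)(1-|w|^2)$. All other steps are bookkeeping about the chain rule and the scale factor $1/R$.
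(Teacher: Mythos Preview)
Your proof is correct and follows essentially the same route as the paper: reduce to the unit disk via $\varphi$, apply the chain rule to pick up the factor $1/R$ (resp.\ $1/R^2$), then use the two elementary inequalities $(1-|w|^2)/|1-w\bar z|\le 2$ and $|1-w\bar z|\ge|z-w|$ on $\mathbf U$. The only cosmetic difference is that you make the identity $|1-w\bar z|^2-|z-w|^2=(1-|z|^2)(1-|w|^2)$ explicit, whereas the paper simply asserts the resulting inequality.
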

\begin{proof}
Let
$$\varphi(\zeta) = \frac{1}{R}(\zeta-z_0).$$ Then $$ \varphi'(\zeta) = \frac{1}{R}.$$
Take $z=\varphi(\zeta)$ and $w=\varphi(\omega)$ and define
$h(z,w)=G(\zeta,\omega)$. It follows that
\begin{equation}\label{compa}\nabla_{\zeta}G(\zeta,\omega)=  \nabla_z
h(z,w)\cdot \varphi'(\zeta)=\frac{1}{R}\nabla_z
h(z,w).\end{equation} Thus
\begin{equation}\label{comp}|\nabla_{\zeta}G(\zeta,\omega)|=\frac{1}{R} |\nabla_z
h(z,w)|.\end{equation} Further
\begin{equation}\label{rere}\frac{1-|w|^2}{|1-\bar z w|}\le \frac{1-|w|^2}{1-
|w|}\le 2.\end{equation}  Combining \eqref{rere}, \eqref{comp} {
with} \eqref{oneeq}, we obtain \eqref{first}. To get \eqref{sec},
observe first that for $\omega = \omega_1+i \omega_2$
\begin{equation}\label{xi}\partial_{\omega_1}=\partial_{\omega}+\partial_{\bar \omega}\end{equation} and
\begin{equation}\label{xibar}\partial_{\omega_2}=i(\partial_{\omega}-\partial_{\bar
\omega}).\end{equation} On the other hand, for $|z|\le 1$ and
$|w|\le 1$ we have
$$\left|\frac{1}{(1-w\bar z)^2}\right|\le
\left|\frac{1}{(w-z)^2}\right|.$$ From \eqref{xi}, \eqref{xibar},
\eqref{twoeq}, \eqref{compa} we deduce \eqref{sec}.
\end{proof}
\subsection{Interior estimates of gradient}
\begin{lemma}\label{lem}
Let $u:\overline{\mathbf U}\to\Bbb C$ be a continuous mapping. Then
there exists a positive function $\varpi=\varpi_u(t)$, $t\in (0,2)$,
such that $\lim_{t\to 0}\varpi_u(t)=0$ and $$|u(z)-u(w)|\le
\varpi(|z-w|), \ \ z,w\in \mathbf U.$$ The function $\varpi$ is
called the modulus of continuity of $u$
\end{lemma}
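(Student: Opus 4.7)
The plan is to exhibit $\varpi$ as the standard (optimal) modulus of continuity and reduce the lemma to the Heine--Cantor theorem on uniform continuity. Concretely, for $t\in(0,2)$ I would define
\[
\varpi_u(t) := \sup\bigl\{\,|u(z)-u(w)| : z,w\in\overline{\mathbf U},\ |z-w|\le t\,\bigr\}.
\]
The bound $|u(z)-u(w)|\le \varpi_u(|z-w|)$ for all $z,w\in\mathbf U\subset\overline{\mathbf U}$ then holds essentially by definition, and $\varpi_u$ is automatically nondecreasing in $t$.

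The only nontrivial statement is $\lim_{t\to 0^+}\varpi_u(t)=0$. Here I would invoke compactness: $\overline{\mathbf U}$ is compact and $u$ is continuous on it, hence uniformly continuous by the Heine--Cantor theorem. Given $\varepsilon>0$, uniform continuity supplies a $\delta>0$ such that $|u(z)-u(w)|<\varepsilon$ whenever $|z-w|<\delta$ in $\overline{\mathbf U}$. Taking supremum over such pairs yields $\varpi_u(t)\le\varepsilon$ for all $t<\delta$, which is exactly the claim.

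Finally, to ensure $\varpi_u$ is strictly positive as required by the statement, I would replace the above by $\max\{\varpi_u(t),t\}$ (or simply note that, unless $u$ is constant, the supremum defining $\varpi_u(t)$ is positive for all $t>0$, and if $u$ is constant we may take $\varpi_u(t)=t$). There is no real obstacle here; the content of the lemma is just the existence of a uniform modulus of continuity for continuous functions on a compact set, and the proof is a one-line application of Heine--Cantor after writing down the natural candidate.
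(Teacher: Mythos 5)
Your proof is correct: it is exactly the standard argument (define $\varpi_u(t)$ as the supremum of oscillations over pairs at distance at most $t$, then apply Heine--Cantor on the compact set $\overline{\mathbf U}$), which is precisely what the paper implicitly relies on, since it states this lemma without proof as a well-known fact. Your extra remark handling the word ``positive'' (e.g.\ replacing $\varpi_u$ by $\max\{\varpi_u(t),t\}$) is a harmless and sensible touch; nothing is missing.
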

\begin{lemma}\label{meca}
Let $Y: D\to \mathbf U$ be a $C^2$ mapping of a domain $D\subset
\mathbf U$. Define $\mathbf
U(z_0,\rho):=\{z\in\mathbf{C}:|z-z_0|<\rho\}$ and assume that the
closure of $\mathbf U(z_0,\rho)$ is contained in $D$, and let $Z\in
\mathbf C$ be any complex number. Then we have the estimate:
\begin{equation}\label{2112}\begin{split}
|\nabla h(z_0)|&\le
\frac{2}{\rho^2}\int_{|y-z_0|=\rho}|Y(y)-Z|d\mathcal{H}^1(y)\
\end{split}
\end{equation}
where $h(z)$, $z\in\overline{\mathbf U(z_0,\rho)}$ is the Poisson
integral of $Y|_{z_0+\rho\mathbf T}$ and $\mathbf T$ is the unit
circle. Moreover $d\mathcal{H}^1$ is the Hausdorff probability
measure (i.e. normalized arc length measure).
\end{lemma}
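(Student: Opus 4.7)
The plan is to express $h$ via the Poisson integral formula on the disk $\mathbf U(z_0,\rho)$ and then differentiate under the integral sign at the centre $z_0$. Writing
\[
P(z,y)=\frac{\rho^2-|z-z_0|^2}{|z-y|^2},\qquad y\in\partial\mathbf U(z_0,\rho),
\]
we have $h(z)=\int_{|y-z_0|=\rho}P(z,y)Y(y)\,d\mathcal{H}^1(y)$ with $P(z_0,y)\equiv 1$, so the Poisson integral reproduces the constant $Z$ in the sense that $\int P(z,y)\,d\mathcal{H}^1(y)$ equals a constant (as a function of $z$, it is harmonic with boundary value $1$, hence identically $1$). Consequently the trick is to write
\[
h(z)-Z=\int_{|y-z_0|=\rho}P(z,y)\bigl(Y(y)-Z\bigr)\,d\mathcal{H}^1(y),
\]
which allows us to replace $Y$ by $Y-Z$ before differentiating.

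The next step is to differentiate under the integral and evaluate at $z=z_0$. A direct computation with $\xi=z-z_0$, $\eta=y-z_0$, $|\eta|=\rho$, gives
\[
\partial_{\xi_j}\!\left(\frac{\rho^2-|\xi|^2}{|\xi-\eta|^2}\right)\bigg|_{\xi=0}=\frac{2\eta_j}{\rho^2},\qquad j=1,2,
\]
so that $|\nabla_z P(z,y)|_{z=z_0}|=2|\eta|/\rho^{2}=2/\rho$. Because $Y$ is $C^2$ up to $\partial\mathbf U(z_0,\rho)$, differentiation under the integral is justified, and we obtain
\[
\nabla h(z_0)=\int_{|y-z_0|=\rho}\nabla_z P(z_0,y)\bigl(Y(y)-Z\bigr)\,d\mathcal{H}^1(y).
\]

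Taking absolute values and applying the pointwise bound for $|\nabla_z P(z_0,y)|$ yields the desired estimate \eqref{2112}. The only mildly subtle point is bookkeeping the normalisation of $d\mathcal H^1$ so that the Poisson kernel reproduces constants; once that is fixed, the computation is routine, and the key input is simply the pointwise estimate $|\nabla_z P(z_0,y)|\le 2/\rho$ at the centre of the disk. No use is made of any structural property of $Y$ beyond continuity of its boundary values, so the constant $Z$ is completely free — a fact that will be exploited in later applications by choosing $Z$ near the mean of $Y$.
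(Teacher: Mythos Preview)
Your argument is essentially the same as the paper's: compute the gradient of the Poisson kernel at the centre, subtract the constant $Z$ (which the kernel reproduces), and estimate. The paper does this on the unit disk and then rescales by $z\mapsto z_0+\rho z$, whereas you work directly on $\mathbf U(z_0,\rho)$; these are trivially equivalent.

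One bookkeeping point you gloss over: with the normalisation stated in the lemma (probability measure on the circle $|y-z_0|=\rho$), your computation $|\nabla_z P(z_0,y)|=2/\rho$ gives the bound $\dfrac{2}{\rho}\int|Y-Z|\,d\mathcal H^1$, not $\dfrac{2}{\rho^2}\int|Y-Z|\,d\mathcal H^1$ as in \eqref{2112}. The paper's own proof picks up the extra $1/\rho$ in the change-of-variables step by implicitly treating $d\mathcal H^1$ as arc length rather than normalised arc length; either way the subsequent application (bounding $\mathcal R$ in Theorem~\ref{fid}) only uses the weaker $\frac{2}{\rho}\max|v-Z|$, so the discrepancy is harmless, but you should flag which normalisation you are actually using rather than asserting that your estimate ``yields \eqref{2112}'' as written.
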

\begin{proof}
Assume that $v\in C^2(\overline{\mathbf U})$ and define
\begin{equation}\label{vh}H(z)=\int_{\mathbf T}P(z,\eta)v(\eta)d\mathcal{H}^1(\eta),\end{equation} where
\begin{equation}\label{poisson}P(z,\eta)=\frac{1-|z|^2}{|z-\eta|^2}, \ \ \ |\eta|=1, \ \ |z|<1.\end{equation} Then $H$ is a harmonic function. It follows that
\begin{equation}\label{d3}\left<\nabla H(z),e\right>=\int_{\mathbf T}\left<\nabla_z P(z,\eta),e\right>
v(\eta)d\mathcal{H}^1(\eta),\ \ \ e\in \mathbf R^2.\end{equation} By
differentiating (\ref{poisson}), we obtain
$$\nabla_zP(z,\eta)=\frac{-2z}{|z-\eta|^2}-\frac{2(1-|z|^2)(z-\eta)}{|z-\eta|^{2+2}}.$$
Hence
$$\nabla_zP(0,\eta)=\frac{2\eta}{|\eta|^{4}}=2\eta.$$
Therefore
\begin{equation}\label{d1}|\left<\nabla_zP(0,\eta),e\right>|\le|\nabla_zP(0,\eta)\|e|=2|e|.\end{equation}
Using (\ref{d3}), (\ref{d1}), we obtain $$|\left<\nabla
H(0),e\right>|\le \int_{\mathbf T}|\nabla_zP(0,\eta)| |e|
|v(\eta)|d\mathcal{H}^1(\eta)=|e|\int_{\mathbf T}|\nabla_z
|v(\eta)|d\mathcal{H}^1(\eta).$$ Hence, we have
\begin{equation}\label{in}|\nabla H(0)|\le
2\int_{\mathbf T} |v(\eta)|d\mathcal{H}^1(\eta)\end{equation} Let
$v(z)=Y(z_0+\rho z)-Z$ and $H(z)=P[v|_{\mathbf T}](z)$. Then
$H(z)=h(z_0+\rho z)-Z$ and $\nabla H(0)=\rho \nabla h(z_0)$.
Inserting this into (\ref{in}), we obtain
\begin{equation}\label{in1}\begin{split}\rho|\nabla h(z_0)|&=
|\nabla H(0)|\le 2\int_{\mathbf T}
|Y(z_0+\rho\eta)-Z|d\mathcal{H}^1(\eta).\end{split}
\end{equation}
Introducing the change of variables $\zeta=z_0+\rho\eta$ in the
integral (\ref{in1}), we obtain
\begin{equation}\label{ini2}\begin{split} |\nabla h(z_0)|&\le
\frac{2}{\rho^2}\int_{|\zeta-z_0|=\rho}|Y(\zeta)-Z|d\mathcal{H}^1(\zeta)
\end{split}
\end{equation}
which is identical with (\ref{2112}).
\end{proof}
\begin{theorem}\label{fid}Let $D$ be a bounded domain, whose diameter is $d$.  Let $A(z)= \{a^{ij}(z)\}_{i,j=1}^2$ be a symmetric  matrix function
defined in a domain $\Omega\subset \mathbf C$ $(a^{ij}= a^{ji})$
satisfying the condition \eqref{(2.1)} and \eqref{con2}.
Let $u(z)$ be any $C^2$ solution of elliptic partial differential
inequality \eqref{(0)} such that
\begin{equation}\label{(3.3)} |u(z)| \le M \text{ in $D$.
}\end{equation} Then there exist constants $C^{(0)}$ and $C^{(1)}$,
depending on modulus of continuity of $u$, $\Lambda$, $\mathfrak L$,
$B$, $\Gamma$, $M$ and $d$ such that
\begin{equation}\label{drita}|\nabla u(z)| <
C^{(0)}\rho(z)^{-1}\max_{|\zeta-z|\le \rho(z)}\{|u(\zeta)|\} +
C^{(1)}\end{equation} where $\rho(z) = \mathrm{dist} (z, \partial
D)$.
\end{theorem}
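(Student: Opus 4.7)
The approach is to represent $u$ on a disk $B(z_0,r)\subset D$ as a harmonic Poisson integral plus a Green-potential correction, estimate each piece via the lemmas just proved, and then promote the resulting implicit inequality to the pointwise bound~\eqref{drita} by a maximum-principle argument applied to the auxiliary function $\phi(z):=\rho(z)|\nabla u(z)|$.

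Fix $z_0\in D$. By the affine change of variables $y=A(z_0)^{-1/2}(z-z_0)$, reduce to the case $A(z_0)=I$; all distances and structural constants change only by factors depending on $\Lambda$ and $\mathfrak L$. On $B=B(z_0,r)$ with $r\le\rho(z_0)$ to be chosen, decompose $u=h+v$, where $h$ is the Poisson integral of $u|_{\partial B}$ and $v=u-h$ vanishes on $\partial B$ with $\Delta v=\Delta u$ in $B$. Lemma~\ref{meca}, applied with $Z=0$, yields $|\nabla h(z_0)|\le (C/r)\max_{\overline B}|u|$. For $v$, use Green's representation
\[
\nabla v(z_0)=\frac{1}{2\pi}\int_B \nabla_z G(z_0,\omega)\,\Delta u(\omega)\,dA(\omega),
\]
and split $\Delta u=L[u]+(\delta^{ij}-a^{ij}(\omega))u_{ij}(\omega)$. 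The $L[u]$ piece is controlled by $|L[u]|\le\mathcal B|\nabla u|^2+\Gamma$ and~\eqref{first}, producing a term of order $r(\mathcal B\sup_B|\nabla u|^2+\Gamma)$. The coefficient-error piece is handled by integration by parts in $\omega$: the boundary contribution vanishes because the factor $1-|\varphi(\omega)|^2$ in the explicit formula preceding the corollary forces $\nabla_z G(z_0,\omega)=0$ on $\partial B$; the bulk is bounded via~\eqref{first}, \eqref{sec}, the Lipschitz estimate $|a^{ij}(\omega)-\delta^{ij}|\le\mathfrak L|\omega-z_0|$, and $|\partial_\omega a^{ij}|\le\mathfrak L$, contributing a term of order $\mathfrak L r\sup_B|\nabla u|$. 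Assembling the pieces gives the local estimate
\[
|\nabla u(z_0)|\le \frac{C}{r}\max_{\overline B}|u|+Cr\bigl(\mathcal B\sup_B|\nabla u|^2+\Gamma\bigr)+C\mathfrak L\,r\sup_B|\nabla u|.
\]

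To convert this implicit inequality into the pointwise bound~\eqref{drita}, consider $\phi(z):=\rho(z)|\nabla u(z)|$, continuous on $\overline D$ and vanishing on $\partial D$, and analyze it at a maximum $z^*\in D$. For $z\in B(z^*,\rho(z^*)/2)$ the inequality $\phi(z)\le\phi(z^*)$ together with $\rho(z)\ge\rho(z^*)/2$ gives $|\nabla u(z)|\le 2|\nabla u(z^*)|$. Apply the local estimate at $z^*$ with $r=\delta\rho(z^*)$ for a small $\delta$, using $\max_{\overline B}|u|\le|u(z^*)|+\varpi_u(\delta\rho(z^*))$ to bring in the modulus of continuity. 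For $\delta$ sufficiently small in terms of $\mathcal B$, $\mathfrak L$, $d$, and $\varpi_u$, the terms proportional to $\sup_B|\nabla u|^2$ and $\sup_B|\nabla u|$ are absorbed into $\phi^*:=\phi(z^*)$, leaving an algebraic inequality that bounds $\phi^*$ uniformly; rescaling, this is equivalent to~\eqref{drita}.

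The main technical obstacle is the quadratic natural-growth term $\mathcal B|\nabla u|^2$: it cannot be absorbed without a careful choice of the local scale $r$, and it is precisely the modulus of continuity of $u$, entering through the smallness of $\max_{\overline B}|u|$ on small balls, that enables the absorption and explains the dependence of $C^{(0)}$, $C^{(1)}$ on $\varpi_u$. Tracking the explicit dependence of the constants on $\varpi_u$, $\Lambda$, $\mathfrak L$, $\mathcal B$, $\Gamma$, $M$, and $d$ through the bootstrap is the chief bookkeeping difficulty.
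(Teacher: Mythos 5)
The representation part of your plan (freezing the coefficients at the base point, writing $u$ as a Poisson integral plus a Green potential, bounding the harmonic part by Lemma~\ref{meca}, bounding the coefficient-error term by an integration by parts that uses \eqref{first}, \eqref{sec} and the Lipschitz bound \eqref{con2}) is essentially the paper's Steps 1--3, and your local inequality is the right one. The gap is in the final step. First, the auxiliary function $\phi(z)=\rho(z)|\nabla u(z)|$ is not known to attain a maximum: the hypotheses only give $u\in C^{2}(D)$ with $|u|\le M$, so a priori $|\nabla u|$ could blow up faster than $\rho^{-1}$ at $\partial D$, and the assertion that $\phi$ is continuous on $\overline D$ and vanishes on $\partial D$ is essentially the conclusion being proved. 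The paper sidesteps this by working with the compact sub-balls $B_{p}$, $0<p<1$, and the weighted maxima $\mu_{p}=\max_{B_{p}}r_{p}(z)|\nabla u(z)|$, which are automatically finite.

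Second, and more seriously, the ``absorption'' of the quadratic term fails. At your maximum point the local estimate, after multiplying by $\rho(z^{*})$ and taking $r=\delta\rho(z^{*})$, has the form $\phi^{*}\le a+b\,\phi^{*2}+c\,\phi^{*}$ with $b\asymp\mathcal B\delta$ and $c\asymp\mathfrak L\delta d$. Choosing $\delta$ small makes $c<1/2$ and makes the product $ab$ small (this, and only this, is where $\varpi_u$ enters: $ab\asymp\mathcal B\,\varpi_u(\delta\rho^{*})$, so the discriminant is positive), but an inequality $b\phi^{*2}-(1-c)\phi^{*}+a\ge 0$ does \emph{not} bound $\phi^{*}$: it only yields the dichotomy $\phi^{*}\le\mu_1$ or $\phi^{*}\ge\mu_2$, and arbitrarily large values of $\phi^{*}$ remain consistent with it no matter how small $b$ is. Some extra mechanism must select the small branch. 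That is precisely the role of Lemma~\ref{claim} in the paper: the quadratic inequality \eqref{(3.14)} holds for every $p\in(0,1)$, $\mu_{p}$ depends continuously on $p$, and $\mu_{p}\to 0$ as $p\to 0$, so by connectedness $\mu_{p}$ can never cross the gap $(\mu_1,\mu_2)$ and hence $\mu_{p}\le\mu_1$ for all $p$; letting $p\to 1$ gives \eqref{(3.17)}. Your single-point maximum argument has no analogue of this continuity-in-$p$ (nor any a priori smallness of $\phi^{*}$), so the sentence claiming the quadratic term is ``absorbed into $\phi^{*}$, leaving an algebraic inequality that bounds $\phi^{*}$ uniformly'' is unjustified. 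To repair the proof you would need to introduce a one-parameter family of weighted maxima (the paper's $B_{p}$, or an exhaustion of $D$) and run the continuity/connectedness argument, at which point you essentially recover the paper's proof.
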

\begin{proof} Fix a point $a\in D$ and
let $B_{p}$, $0<p< 1$, be a closed disk defined by
$$B_{p} = \{z; |z-a|\le {p}\ \mathrm{dist}(a,\partial D)\}.$$ Its
radius is
$$R_p={p}\ \mathrm{dist}(a,\partial D).$$
Define the function $\mu_p$ { as}
\begin{equation}\label{(3.4)} \mu_{p} = \max_{z\in B_{p}}\{|\nabla u|{r_p}(z)\}\end{equation}
where ${r_p}(z) = \mathrm{dist}\ (z,\partial B_{p})=R_p - |z-a|$.
Then there exists a point $z_p\in B_{p}$ such that
\begin{equation}\label{(3.5)}|\nabla
u(z_p)|{r_p}(z_p)=\mu_{p}    \ \ \  \        (z_p\in
B_{p}).\end{equation} We need the following result in the sequel.
\begin{lemma}\label{claim}
The function $\mu_{p}$ is continuous on $(0,1)$ and has { a}
continuous extension at $0$: $\mu_0=0$.
\end{lemma}
\begin{proof}[Proof of Lemma~\ref{claim}]
Let $p_n$ be a sequence converging to a number $p$, let
$\mu_{p_n}=|\nabla u(z_n)|{r_{p_n}}(z_n)$ and assume it converges to
$\mu'_p$. Prove that $\mu'_p= \mu_p$. Passing to a subsequence, we
can assume that $z_n \to z_p'$. Then $z_p'\in B_p$. Thus, $\mu'_p\le
\mu_p$. On the other hand, $\mu_{p_n}\ge |\nabla
u((1-\varepsilon_n)z_p)|{r_{p_n}}((1-\varepsilon_n)z_p)$, where
$\varepsilon_n$ is a positive sequence converging to zero. It
follows that $\mu'_p \ge \lim_{n\to \infty}|\nabla
u((1-\varepsilon_n)z_p)|{r_{p_n}}((1-\varepsilon_n)z_p)= \mu_p$.
Furthermore, since $r_p \le R_p = p\,\mathrm{dist}(a,\partial D)$,
we obtain $$\lim_{p\to 0^{+} }\mu_ p \le|\nabla u(0)|\lim_{p\to
0^{+}} R_p = 0.$$
\end{proof}
Now let $Tz = \zeta$ be a linear transformation of coordinates such
that
\begin{equation}\label{trans}\sum_{i,j=1}^2a^{ij}(z_p)D_{ij}u =\Delta
v,\end{equation}
where $v(\zeta) = u(z)$. 
By \cite[Lemma~11.2.1]{jost} the transformation $T$ can be chosen so
that \begin{equation}\label{T}T=\left(
                                                                 \begin{array}{cc}
                                                                   {\lambda_1}^{-\frac 12} & 0 \\
 0 & {\lambda_2}^{-\frac 12} \\
\end{array}
                                                     \right)\cdot
R,\end{equation} where $\lambda_1$ and $\lambda_2$ are eigenvalues
of the matrix $A(z_p)$ and $R$ is some orthogonal matrix. Then
$$\frac 1{\Lambda}\le \lambda_1,\lambda_2\le \Lambda.$$
 Let
$\nabla^2 u$ denotes the Hessian matrix of $u$:
$$\nabla^2 u=\left(
                                                                   \begin{array}{cc}
                                                                     D_{11} u & D_{12} u \\
    D_{21} u & D_{22} u \\
\end{array}\right).$$
Since
$$\nabla^2 u = T^t \nabla^2 v T,$$ we obtain:
\[\begin{split}
\mathrm{Trace}(A^t \nabla^2 u) &= \mathrm{Trace}(A^t T^t \nabla^2 v
T)\\&=\mathrm{Trace}((TA)^t \nabla^2 v T)\\&=\mathrm{Trace}(\nabla^2
v T (TA)^t )\\&= \mathrm{Trace}(\nabla^2 v T A^ t T^t)\\&=
\mathrm{Trace}( B^t \nabla^2 v ),\end{split}\] where
\begin{equation}\label{bb}B(\zeta) = T A(z) T^t.\end{equation}
Then
$$B(\zeta_p) = I,$$
%
\begin{equation}\label{uv} b^{ij}(\zeta)D_{ij} v(\zeta) \ =
a^{ij}(z)D_{ij}u(z),
\end{equation} where $B(\zeta)=\{b^{ij}\}_{i,j}=1^2$ and
\begin{equation}
\label{(3.6)} \Delta v= (\delta_{ij}-b^{ij}(\zeta))D_{ij}v +
b^{ij}(\zeta)D_{ij}v.\end{equation} Further,
$T(\mathbf{U}(z_p,r_p))\subset T(B_{p}) \subset T(D)=:D'$. From
\eqref{T} we see that $T(D(z_p,r_p))$  is an ellipse with axes equal
to $\lambda_1^{-1/2}\cdot r_p$ and $\lambda_2^{-1/2} \cdot r_p$ and
with the center at $\zeta_p=T(z_p)$. Then
 $D_\lambda:=\{\zeta: |\zeta-\zeta_p|\le \lambda r_p\}$ is a closed disk in $T(B_{p})$ provided that \begin{equation}\label{lala}
0 < \lambda< \frac{1}{2\sqrt{\Lambda}}.\end{equation} Let $G(\zeta,
\omega)$ be the Green function of the disk $D_\lambda$. So that,
from \eqref{(3.6)}
\[\begin{split} v &=-\frac{1}{\pi}\int_{D_\lambda} G(\zeta,\omega)(\delta_{ij}-b^{ij}(\omega))D_{ij} v(\omega)d\mathcal L^2(\omega)\\&\ \ \ \ \ -\frac{1}{\pi}\int_{D_\lambda}
G(\zeta,\omega)b^{ij}(\omega)D_{ij} v(\omega)d\mathcal L^2(\omega) +
h(\zeta),\end{split}\] where $d\mathcal L^2(z)=dxdy$ is the Lebesgue
two-dimensional measure in the complex plane and $h(\zeta)$ is the
harmonic function which takes the same { values} as $v(\zeta)$ for
$\zeta\in
\partial D_\lambda$. Then
\begin{equation}\label{(3.7)} |\nabla v(\zeta_p)|\le \mathcal{P} + \mathcal{Q} + \mathcal{R},\end{equation} where
\[\begin{split} &\mathcal{P} = |\frac{1}{\pi}\int_{D_\lambda} \nabla_{\zeta} G(\zeta_p,\omega)b^{ij}(\omega)D_{ij} v(\omega)d\mathcal L^2(\omega) |\\ &\mathcal{Q} =
|\frac{1}{\pi}\int_{D_\lambda} \nabla_{\zeta}
G(\zeta_p,\omega)(\delta_{ij}-b^{ij}(\omega))D_{ij}
v(\omega)d\mathcal L^2(\omega)|
\\&\mathcal{R} =|\nabla_{\zeta}h(\zeta_p)|.\end{split}\]
Further, it follows by  \eqref{con2} that $A$ is differentiable
almost everywhere. From  \eqref{bb} we obtain
$$D B(\zeta) \cdot T= T \cdot D A(z)\cdot T^{t},\ \ \ \text{for a.e.}\ \  z.$$ Here $D A(z)$ is the differential operator defined by
$$A(z+h)=A(z)+DA(z)h+o(|h|).$$ Notice that $DA(z)h$ is a matrix. Since
$\Lambda^{-1/2}|z|\le |T z|\le \Lambda^{1/2}|z|$, having in mind
\eqref{con2}, we obtain
\begin{equation}\label{B}\|D B(\zeta)|\le |T|^3\|D A(z)\|\le \Lambda^{3/2} \mathfrak L.\end{equation}
In the previous formula we mean the following norms: the norm of a
matrix $L$ is defined by  $|L|=\max\{|L h|: |h|=1\}$, and the norm
of an operator $DX(z)$ by $\|D X(z)\|=\max\{|D A(z)h|: |h|=1\}$,
($X=A,B$). Thus
\begin{eqnarray}\label{(3.8)}
   |B(\zeta)-B(\zeta_p)| &=& |B(\zeta)-\mathrm{I}|\le \Lambda^{3/2}\mathfrak L|\zeta -
   \zeta_p|
\end{eqnarray} As
$$|T(z)-T(z_p)|\le \lambda {r_p}(z_p),$$ by using the
inequalities
$$r_p(z_p)\le d(z,z_p)+{r_p}(z),$$
$$ d(z,z_p)\le \Lambda^{1/2} |T(z)-T(z_p)|$$
and by \eqref{(3.4)}, $$|\nabla u(z)|{r_p}(z)\le \mu_{p},$$ we
obtain
$$|\nabla u(z)|\le (1-\lambda
\Lambda^{1/2})^{-1}{r_{{p}}(z_p)}^{-1}\mu_{{p}} \text{ for $z \in
T^{-1} (D_\lambda)(\subset B_{p})$}. $$ From \eqref{lala} we obtain
that
\begin{equation}\label{lale}(1-\lambda
\Lambda^{1/2})^{-2}<4.\end{equation} Having in mind the formula,
$\nabla u(z) = \nabla v(\zeta)\cdot T$ we obtain
\begin{equation}
\label{thre} |\nabla v(\zeta)|\le 2 \Lambda^{1/2}
{r_p}(z_p)^{-1}\mu_{p}\end{equation}for $\zeta \in D_{\lambda}$.\\
Since $$|a^{ij}(z)D_{ij}u|\le\mathcal{B}|\nabla u|^2 + \Gamma,$$
$$|b^{ij}(\zeta)D_{ij} v(\zeta)| \ = |a^{ij}(z)D_{ij}u(z)|,$$ it follows that
\begin{equation}\label{tg}|b^{ij}(\zeta)D_{ij} v(\zeta)|\le \mathcal{B}|T|^2|\nabla
v|^2+\Gamma=\mathcal{B}\Lambda|\nabla v|^2+\Gamma
\end{equation} and therefore, from \eqref{thre}
{ we find that} \begin{equation} \label{(3.10)} |b^{ij}(\zeta)D_{ij}
v(\zeta)| \ \le 4\Lambda^2\mathcal{B}{r_p}(z_p)^{-2}\mu_{p}^2
+\Gamma.\end{equation} { Now  we} divide the proof into four steps:
\\
 {\it Step 1: Estimation of
$\mathcal{P}$}. From \eqref{first} and \eqref{(3.10)} we first have
\[\begin{split}
|\frac{1}{\pi}&\int_{D_\lambda} \nabla_{\zeta}
G(\zeta_p,\omega)b^{ij}(\omega)D_{ij} v(\omega)d\mathcal
L^2(\omega)|\\&\le \frac{2}{\pi}\int_{|\omega-\zeta_p|\le \lambda
{r_p}(z_p)} \frac{1}{|\omega-\zeta_p|}|b^{ij}(\omega)D_{ij}
v(\omega)|d\mathcal L^2(\omega)\\&\le
\frac{2}{\pi}\int_{|\omega-\zeta_p|\le \lambda {r_p}(z_p)}
\frac{1}{|\omega-\zeta_p|}(4\Lambda^2\mathcal{B}{r_p}(z_p)^{-2}\mu_{p}^2
+\Gamma)d\mathcal L^2(\omega) \end{split}\] Therefore
\begin{equation}\label{(3.11)} \mathcal{P} \le \frac{16\Lambda^2\mathcal{B}\lambda\mu_{p}^2}{{r_p}}
+4\Gamma {r_p}\lambda.
\end{equation}
{\it Step 2: Estimation of $\mathcal{Q}$.} Let $\mathbf{n_\omega}=
(\cos\alpha_1,\cos\alpha_2)$ be the unit inner vector of $\partial
D_\lambda$ at $\omega$. Then from { Green's} formula
$$\int_{\partial D_\lambda} \sum_{i=1}^2 u_i(\omega)\cos \alpha_i
d\mathcal{H}^1(\omega)= \int_{D_\lambda} (\partial_{\omega_1}u_1+
\partial_{\omega_2}u_2) d\mathcal L^2(\omega),$$ proceeding as in
\cite[Theorem~2]{nagumo}, we obtain
\begin{equation}\begin{split} \mathcal{Q} &\le |\frac{1}{\pi}\int_{|\omega-\zeta_p|= \lambda {r_p}(z_p)} \nabla_{\zeta} G(\zeta_p,\omega)
(\delta_{ij}-b^{ij}(\omega))\partial_i v(\omega)\cos\alpha_j
d\mathcal{H}^1(\omega)|\\&+ |\frac{1}{\pi}\int_{|\omega-\zeta_p|\le
\lambda {r_p}(z_p)} \nabla_{\zeta}
G(\zeta_p,\omega)\partial_{\omega_j}b^{ij}(\omega)\partial
_iv(\omega)d\mathcal L^2(\omega)|
\\&+|\frac{1}{\pi}\int_{|\omega-\zeta_p|\le \lambda {r_p}(z_p)} \partial_{\omega_j}\nabla_{\zeta} G(\zeta_p,\omega)(\delta_{ij}-b^{ij}(\omega))\partial_i v
(\omega)d\mathcal L^2(\omega)|.
\end{split}
\end{equation}
By using the Cauchy-Schwarz inequality, \eqref{first}, \eqref{sec},
\eqref{B}, \eqref{(3.8)}, \eqref{thre}, we obtain
 \begin{equation*}\mathcal{Q}\le 8\Lambda^2 \mathfrak L\lambda\mu_{p}+4\Lambda^2 \mathfrak L\lambda\mu_{p}+4\Lambda^2 \mathfrak L\lambda\mu_{p},\end{equation*} i.e.
\begin{equation}\label{(3.12)}\mathcal{Q}\le 16\Lambda^2 \mathfrak L\lambda\mu_{p}\end{equation}
{\it Step 3: Estimation of $\mathcal{R}$}.
\\
Let $\varpi(t)=\varpi_v(t)$ be the modulus of continuity of $v$ as
in Lemma~\ref{lem}. From \eqref{2112}, for $Z= v(\zeta_p)$ ($Z=0$),
$Y(\zeta) =v(\zeta)$ and $\rho=\lambda{r_p}(z_p)$, by using
Lemma~\ref{meca} and \ref{lem}, we obtain
\begin{equation}\label{(3.13)} \begin{split}\mathcal{R} \le |\nabla h(z_p)|&\le
\frac{2}{\lambda^2{r_p}(z_p)^2}\int_{|\omega-\zeta_p|=\lambda{r_p}(z_p)}|v(\omega)-Z|d\mathcal{H}^1(\omega)
\\&\le \frac{2}{\lambda{r_p}(z_p)}\max\{|v(\zeta)-Z|:
|\zeta-\zeta_p|=
\lambda {r_p}(z_p)\}\\
&\le\frac{\min\{2\varpi(\lambda
{r_p}(z_p)),2K\}}{\lambda{r_p}(z_p)},\end{split}\end{equation} where
\begin{equation}\label{K}K =
\sup_{|z-a|\le\rho(a)}|u(z)|.\end{equation} {\it Step 4: The finish
of the proof.} As
$$|\nabla v(\zeta_p)\ge \Lambda^{-1/2}|\nabla u(z_p)| =
\Lambda^{-1/2}{r_p}(z_p)^{-1}\mu_{p}$$ and ${r_p}(z_p)<2\rho(a)\le
d$,  from \eqref{(3.7)}, \eqref{(3.11)}, \eqref{(3.12)} and
\eqref{(3.13)}, { we get}
\begin{equation}\label{(3.14)}A_0\mu_{p}^2+ B_0\mu_{p} +C_0\ge 0,\end{equation}
where $$A_0 = {16\mathcal{B}\Lambda^2\lambda},$$
$$B_0 = 16\Lambda^2 \mathfrak L\lambda{{r_p}(z_p)}-\Lambda^{-1/2}$$ and
$$C_0
=4\Gamma {r_p}^2(z_p)\lambda+\frac{2\min\{\varpi(\lambda
{r_p}(z_p)),K\}}{\lambda}.$$ We can take $\lambda>0$ depending on
$\varpi$, $\Lambda$, $\mathfrak L$, $B$, $\Gamma$ and $d$ so small
that
\begin{equation}\label{(3.15)} B_0^2> 4A_0C_0\end{equation} and
\begin{equation}\label{lam}
16\Lambda^2 \mathfrak L\lambda{{r_p}(z_p)}\lambda\le
1/2\Lambda^{-1/2}.
\end{equation}
 Let $\mu_1$ and
$\mu_2$ ($\mu_1<\mu_2$) be the distinct real roots of the equation
\begin{equation}\label{(3.16)} A_0\mu^2 + B_0 \mu + C_0= 0.\end{equation} Then
 from \eqref{(3.14)} { we have}
$$\mu_p \le \mu_1 \text{ or } \mu _{p} \ge \mu_2.$$
Lemma~\ref{claim} asserts that $\mu_{p}$ depends on ${p}$
continuously for $0 <{p} < 1$ and $\lim_{{p}\to 0}\mu_{p} = 0$. Then
we have only $\mu_{p}\le \mu_1$. And, letting ${p}$ tend to $1$, by
the definition of $\mu _{p}$
\begin{equation}\label{(3.17)} |\nabla u(a)|\le \mu_1\rho(a)^{-1}.\end{equation} As $\mu_1$ is the smaller root of \eqref{(3.16)},
\[\begin{split}\mu_1&=\frac{-B_0-\sqrt{B_0^2-4A_0C_0}}{2 A_0}\\&=
\frac{2C_0}{-B_0+\sqrt{B_0^2-4A_0C_0}}\\&\le
-\frac{2C_0}{B_0}.\end{split}\]  From \eqref{(3.17)} and \eqref{K}
we get
\begin{equation}\label{nna}|\nabla u(a)|\le
C^{(0)}\rho(a)^{-1}\sup_{|z-a|\le\rho(a)}|u(z)|+C^{(1)}\end{equation}
where $C^{(0)}$ and $C^{(1)}$ depend on  $\Lambda$, $\mathfrak L$,
$B$, $M$, $\Gamma$, $d$ and on modulus of continuity of $u$.
%
\end{proof}
%
%
%
%
%
\subsection{Boundedness of gradient}
\begin{definition}\label{remi} We say that a domain $D$ satisfies
the {\it exterior sphere condition} for some $\kappa>0$ {  if to}
any point $p$ of $\partial D$ there corresponds a ball $B_p\subset
\mathbf C$ with radius $\kappa$ such that $\overline D\cap
B_p=\{p\}$.
\end{definition}
\begin{theorem}[A priory bound] \label{aprio} \cite[Lemma~2]{nagumo}
 Let $D$ be a complex domain with diameter $d$ satisfying exterior sphere condition for some $\kappa>0$.
 Let $u(z)$ be a twice differentiable mapping satisfying
the elliptic differential inequality \eqref{(0)} in $D$ satisfying
the boundary condition $u = 0$ $(z \in G)$. Assume in addition that
$|u(z)|\le M$, $z\in D$, \begin{equation}\label{nagumo}
\frac{4}{\pi}\cdot 16 \mathcal{B}\Gamma M <1\end{equation} and $u\in
C(\overline D)$. Then
\begin{equation}\label{ego}|\nabla u |\le \gamma,\ \ z\in D,\end{equation} where $\gamma$
is a constant depending {  only} on $\kappa$, $M$, $\mathcal{B}$,
$\Gamma$, $\mathfrak L$, $\Lambda$ and $d$.
\end{theorem}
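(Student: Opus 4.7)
The plan is to reduce Theorem~\ref{aprio} to a boundary Lipschitz estimate and then combine it with the interior bound of Theorem~\ref{fid}. Theorem~\ref{fid} already gives
\[|\nabla u(z)| \le C^{(0)}\rho(z)^{-1}\sup_{|\zeta-z|\le\rho(z)}|u(\zeta)| + C^{(1)},\]
so it suffices to produce a uniform bound $|u(z)| \le C'\cdot\mathrm{dist}(z,\partial D)$ on a strip near $\partial D$; on the compact complement $\rho(z)$ is bounded below and Theorem~\ref{fid} concludes directly.

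For the boundary estimate I would build, for each $p\in\partial D$, a radial barrier tailored to the exterior sphere. Let $B_p$ be the exterior ball of radius $\kappa$ touching $\partial D$ at $p$ with center $y_p$, and set $d(z)=|z-y_p|-\kappa$. Try $\Phi(z)=\psi(d(z))$ with $\psi(0)=0$, $\psi'>0$, $\psi''<0$. Since $|\nabla d|=1$ and $|D^2 d|\le 1/\kappa$, ellipticity yields
\[L[\Phi] \le \Lambda^{-1}\psi''(d) + \Lambda\kappa^{-1}\psi'(d),\]
so the required sign condition $L[\Phi]+\mathcal{B}|\nabla\Phi|^2+\Gamma\le 0$ reduces to the Riccati-type inequality
\[\psi''(t) + \Lambda^2\kappa^{-1}\psi'(t) + \mathcal{B}\Lambda(\psi'(t))^2 + \Gamma\Lambda \le 0.\]
Setting $\phi=\psi'$, separation of variables shows that $\phi$ can decrease from an arbitrarily large initial value only over an interval of length at most $\pi/(2\sqrt{\mathcal{B}\Gamma}\,\Lambda)$, and requiring $\psi$ to reach the value $M$ within that interval is exactly what the smallness hypothesis $\tfrac{64}{\pi}\mathcal{B}\Gamma M<1$ encodes. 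A comparison argument applied to $\pm u-\Phi$ on a subdomain $D\cap B(p,r_0)$---either pointwise at an interior maximum of $u-\Phi$ where $\nabla u=\nabla\Phi$ makes the quadratic gradient terms cancel, or after the linearising substitution $\tilde u = \mathcal{B}^{-1}(e^{\mathcal{B} u}-1)$---then gives $|u(z)|\le\Phi(z)\le\psi'(0)\,|z-p|+O(|z-p|^2)$ uniformly in $p$.

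The main obstacle is the barrier step: the Riccati ODE is integrable only over a bounded interval whose length is controlled by $\mathcal{B}\Gamma$, and the sharp smallness hypothesis is precisely what guarantees that $\psi$ can still climb to $M$ before the ODE blows up. Without such a condition a pointwise gradient bound genuinely fails for natural-growth inequalities, so no amount of interior estimates can compensate. Modulo that point, the remaining gluing of the boundary Lipschitz estimate and the interior bound of Theorem~\ref{fid} is routine and produces the stated constant $\gamma$ depending only on $\kappa,M,\mathcal{B},\Gamma,\mathfrak L,\Lambda,d$.
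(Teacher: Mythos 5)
First, a structural remark: the paper does not prove this statement at all --- Theorem~\ref{aprio} is quoted verbatim from Nagumo \cite[Lemma~2]{nagumo}, and the only original content is the Remark tracing the corrected constant $\frac{4}{\pi}\cdot 16\,\mathcal{B}\Gamma M<1$ to an inequality on p.~214 of Nagumo used in his Theorem~2 (the interior, Green-representation estimate). That remark already tells you where the smallness hypothesis really lives, and it is not where your proposal puts it. Your barrier step does not actually need \eqref{nagumo}: for the Riccati inequality $\psi''+\Lambda^2\kappa^{-1}\psi'+\mathcal{B}\Lambda(\psi')^2+\Gamma\Lambda\le 0$, the height the barrier climbs before its slope dies is $\int_0^{\phi_0}\phi\,\bigl(\Lambda^2\kappa^{-1}\phi+\mathcal{B}\Lambda\phi^2+\Gamma\Lambda\bigr)^{-1}d\phi\sim(\mathcal{B}\Lambda)^{-1}\log\phi_0$, which is unbounded as the initial slope $\phi_0\to\infty$; so for \emph{any} $M,\mathcal{B},\Gamma$ one can reach height $M$ at the price of a large (but admissible, since it may depend on $M,\mathcal{B},\Gamma,\kappa,\Lambda$) Lipschitz constant $\psi'(0)$. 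Hence your claim that ``requiring $\psi$ to reach the value $M$ within that interval is exactly what the smallness hypothesis encodes'' is unfounded, and the specific factor $\frac{4}{\pi}\cdot16$ cannot come out of that ODE; per the paper's Remark it comes from the constant in Nagumo's interior estimate, i.e.\ from the step you treat as ``routine.''

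That is also where the genuine gap sits. You propose to finish with Theorem~\ref{fid}, but the constants $C^{(0)},C^{(1)}$ there depend on the \emph{modulus of continuity of $u$}, which is not among the data $\kappa,M,\mathcal{B},\Gamma,\mathfrak L,\Lambda,d$ allowed in the conclusion \eqref{ego}. A solution of \eqref{(0)} with $|u|\le M$ has no a priori modulus of continuity handed to you (the right-hand side $\mathcal{B}|\nabla u|^2+\Gamma$ is exactly what you are trying to bound), so the ``gluing'' is not routine: you must either (i) rerun the interior argument of Theorem~\ref{fid} replacing $\varpi(\lambda r_p)$ by the sup norm $M$ --- and then the discriminant condition $B_0^2>4A_0C_0$ is no longer free, which is precisely where a smallness hypothesis of the type \eqref{nagumo} is consumed --- or (ii) first prove a quantitative continuity estimate for bounded solutions of \eqref{(0)} in terms of $\Lambda,\mathfrak L,\mathcal{B},\Gamma,M,d$ (e.g.\ via an exponential substitution and De Giorgi--Nash type results), which your sketch does not supply. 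Two smaller points: at an interior maximum of $u-\Phi$ the gradient terms only cancel if the barrier inequality is strict, so build $\Phi$ with $L[\Phi]+\mathcal{B}|\nabla\Phi|^2+\Gamma<0$; and the linearising substitution must use $e^{\mathcal{B}\Lambda u}$ rather than $e^{\mathcal{B}u}$, since $\left<A\nabla u,\nabla u\right>\ge\Lambda^{-1}|\nabla u|^2$ only. With the interior step repaired along the lines of Nagumo's Theorem~2 (or the paper's Theorem~\ref{fid} with $M$ in place of the modulus of continuity), your boundary-barrier-plus-interior plan is indeed the standard route to this lemma; as written, however, it neither uses the hypothesis \eqref{nagumo} where it is needed nor delivers a constant $\gamma$ with the stated dependence.
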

\begin{remark}
See \cite[Theorem~15.9]{gt} for a related result. In the statement
of \cite[Lemma~2]{nagumo} instead of condition \eqref{nagumo}
appears $$16 \mathcal{B}\Gamma M <1$$ { However,  a related
 proof} lays on
\cite[Theorem~2]{nagumo}, { wich}, it seems that works only under
the condition \eqref{nagumo}. Indeed, the right hand side of the
inequality in the first line on \cite[p.~214]{nagumo} should be
multiplied by $$\frac{2 \Gamma(1 + m/2)}{\sqrt\pi \Gamma((m +
1)/2)},$$ where $m$ is the dimension of the space (in our case
$m=2$) and
$$\frac{2 \Gamma(1 + 2/2)}{\sqrt\pi \Gamma((2 +
1)/2)}=\frac{4}{\pi}.$$
\end{remark}
\section{Proof of the main theorem}
We need the following lemmas.
\begin{lemma}\label{forhelp}\cite{jmaa}
Every $K-$q.r. mapping $w(z)=\rho(z) S(z):D\to \Omega$, $D,$
$\Omega,\subset \Bbb C$, $\rho=|w|$, $S(z)=e^{is(z)}, s(z)\in
[0,2\pi)$, satisfies the inequalities
\begin{equation}\label{edtreta}\rho|\nabla S|\le K|\nabla
\rho|\end{equation} and
\begin{equation}\label{ekaterta}|\nabla \rho|\le K\rho|\nabla S|
\end{equation}
almost everywhere on $D$. Inequalities \eqref{edtreta} and
\eqref{ekaterta} are sharp{ ;} the equality
\begin{equation}\label{edtreta3}\rho|\nabla S|=|\nabla
\rho|\end{equation} holds if $w$ is a $1$-quasiregular mapping. We
also have
\begin{equation}\label{nabur}K^{-1}|\nabla w|\le |\nabla \rho|\le |\nabla w|.\end{equation}
\end{lemma}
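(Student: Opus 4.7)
The plan is to work in the polar decomposition $w = \rho S$ with $S = e^{is}$, valid a.e.\ where $w \ne 0$, and derive two key identities for the Wirtinger derivatives. Writing
\begin{equation*}
w_z = S(\rho_z + i\rho\,s_z), \qquad w_{\bar z} = S(\rho_{\bar z} + i\rho\,s_{\bar z}),
\end{equation*}
and using $\rho_{\bar z} = \overline{\rho_z}$, $s_{\bar z} = \overline{s_z}$ (since $\rho, s$ are real), the cross terms cancel under the sum (because $\mathrm{Im}(\rho_{\bar z}\overline{s_{\bar z}}) = -\mathrm{Im}(\rho_z\overline{s_z})$) and one obtains the polar Pythagorean identity $|w_z|^2 + |w_{\bar z}|^2 = 2|\rho_z|^2 + 2\rho^2|s_z|^2$. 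In view of \eqref{opernorm} and \eqref{ll} this reads
\begin{equation*}
|\nabla w|^2 + l(\nabla w)^2 = |\nabla \rho|^2 + \rho^2|\nabla S|^2.
\end{equation*}
The same computation gives the Jacobian identity $|w_z|^2 - |w_{\bar z}|^2 = 4\rho\,\mathrm{Im}(\rho_z\overline{s_z})$, and Cauchy--Schwarz produces $|\nabla w|\cdot l(\nabla w) \le \rho|\nabla \rho||\nabla S|$.

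With these two relations the $K$-q.r.\ hypothesis $|\nabla w| \le K\,l(\nabla w)$ closes the argument. Setting $t = |\nabla w|/l(\nabla w) \in [1,K]$ at points where $l(\nabla w) > 0$,
\begin{equation*}
t + t^{-1} = \frac{|\nabla w|^2 + l(\nabla w)^2}{|\nabla w|\cdot l(\nabla w)} = \frac{|\nabla \rho|^2 + \rho^2|\nabla S|^2}{|\nabla w|\cdot l(\nabla w)} \ge \frac{\rho|\nabla S|}{|\nabla \rho|} + \frac{|\nabla \rho|}{\rho|\nabla S|}.
\end{equation*}
Since $t \in [1,K]$ the left-hand side is at most $K + K^{-1}$, and the convex function $x \mapsto x + x^{-1}$ attains $K + K^{-1}$ only at $x = K$ and $x = K^{-1}$, so $K^{-1} \le \rho|\nabla S|/|\nabla \rho| \le K$. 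This is exactly \eqref{edtreta} and \eqref{ekaterta}.

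For \eqref{nabur}, differentiating $\rho^2 = w\bar w$ gives $2\rho\rho_z = w_z\bar w + w\,\overline{w_{\bar z}}$; the triangle inequality yields $|\nabla \rho| = 2|\rho_z| \le |\nabla w|$. For the reverse estimate $|\nabla w|^2 \le K|\nabla w|\cdot l(\nabla w) \le K\rho|\nabla \rho||\nabla S| \le K^2|\nabla \rho|^2$, where the last step uses \eqref{edtreta}. Finally, in the conformal case $w_{\bar z} = 0$ gives $\rho_{\bar z} = -i\rho\,s_{\bar z}$, and taking moduli yields $|\nabla \rho| = \rho|\nabla S|$. The main conceptual hurdle is isolating the polar Pythagorean identity together with the Cauchy--Schwarz bound on the Jacobian; once these are in hand, the proof reduces to the elementary convexity of $x + x^{-1}$.
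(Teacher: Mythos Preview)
The paper does not include a proof of this lemma; it is quoted from the reference \cite{jmaa} and stated without argument. So there is nothing in the paper to compare your approach against.

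Your argument is correct and self-contained. The key identities
\[
|\nabla w|^2+l(\nabla w)^2=|\nabla\rho|^2+\rho^2|\nabla S|^2,\qquad |\nabla w|\,l(\nabla w)=|J_w|\le \rho\,|\nabla\rho|\,|\nabla S|,
\]
follow exactly as you indicate from the Wirtinger computation $w_z=S(\rho_z+i\rho s_z)$, $w_{\bar z}=S(\rho_{\bar z}+i\rho s_{\bar z})$ together with $\rho_{\bar z}=\overline{\rho_z}$, $s_{\bar z}=\overline{s_z}$; and the convexity of $x\mapsto x+x^{-1}$ then pins the ratio $\rho|\nabla S|/|\nabla\rho|$ into $[K^{-1},K]$. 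The derivation of \eqref{nabur} and of the equality case for $K=1$ is also fine. Two small remarks: you should note that at points where $l(\nabla w)=0$ the quasiregularity forces $|\nabla w|=0$, and then the first identity gives $|\nabla\rho|=\rho|\nabla S|=0$, so all inequalities hold trivially; and the decomposition $w=\rho e^{is}$ is only available where $w\ne0$, but for a nonconstant $K$-q.r.\ map the zero set has measure zero, so the ``a.e.'' conclusion is unaffected.
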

\begin{lemma}\label{meli}
If $w = \rho S:\mathbf U \to \mathbf U$, $\rho = |w|$, is twice
differentiable, then
\begin{equation}\label{(00)}L[\rho]=\rho(a^{11}|p|^2+2a^{12}\left<p,q\right>+a^{22}|q|^2)+\left<L[w], S\right>,\end{equation} where $p= D_1 S$ and $q=D_2 S$.

If in addition $w$ is $K-q.c.$ and satisfies
\begin{equation}\label{(000)}|L[w]|=|\sum_{i,j=1}^2 a^{ij}(z)D_{ij}w| \le \mathcal{B}|\nabla w|^2+\Gamma
,\end{equation} then there exists a constant $\Theta$ depending on
$K$, $\mathcal B$ and $\Gamma$ such that
\begin{equation}\label{lll}|L[\rho]|\le \frac{\Theta}{\rho}|\nabla \rho|^2
+\Gamma.\end{equation}
\end{lemma}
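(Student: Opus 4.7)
The plan is to derive the identity by a direct Leibniz-rule computation on $w=\rho S$ combined with the two constraints that come from $|S|\equiv 1$, and then to turn the identity into the inequality by invoking Lemma~\ref{forhelp}.

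\emph{Step 1: the identity for $L[\rho]$.} First I would differentiate $w=\rho S$ twice to get
\[D_{ij}w=(D_{ij}\rho)S+(D_i\rho)(D_jS)+(D_j\rho)(D_iS)+\rho\,D_{ij}S.\]
Pairing with $S$ and using $|S|^2\equiv 1$, which gives $\langle D_iS,S\rangle=0$ and (by differentiating once more) $\langle D_{ij}S,S\rangle=-\langle D_iS,D_jS\rangle$, I obtain
\[\langle D_{ij}w,S\rangle=D_{ij}\rho-\rho\langle D_iS,D_jS\rangle.\]
Multiplying by $a^{ij}$, summing, and using the symmetry $a^{12}=a^{21}$ turns the quadratic form $\sum_{i,j}a^{ij}\langle D_iS,D_jS\rangle$ into $a^{11}|p|^2+2a^{12}\langle p,q\rangle+a^{22}|q|^2$, which rearranges to the claimed identity.

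\emph{Step 2: the quadratic form bound.} Writing $p=(p_1,p_2)$, $q=(q_1,q_2)$ and splitting the quadratic form coordinatewise into $(a^{11}p_k^2+2a^{12}p_kq_k+a^{22}q_k^2)$ for $k=1,2$, the ellipticity bound \eqref{(2.1)} applied to each summand yields
\[\bigl|a^{11}|p|^2+2a^{12}\langle p,q\rangle+a^{22}|q|^2\bigr|\le \Lambda(|p|^2+|q|^2)=\Lambda|\nabla S|^2.\]
Using \eqref{ekaterta} from Lemma~\ref{forhelp}, $|\nabla S|\le K|\nabla\rho|/\rho$, so the first term on the right of the identity is at most $\Lambda K^2|\nabla\rho|^2/\rho$.

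\emph{Step 3: the error term and assembly.} For the second term I would apply the hypothesis \eqref{(000)} to get $|\langle L[w],S\rangle|\le|L[w]|\le \mathcal{B}|\nabla w|^2+\Gamma$, then use \eqref{nabur} to replace $|\nabla w|$ by $K|\nabla\rho|$. Since $w$ takes values in $\mathbf U$ we have $\rho\le 1$, hence $|\nabla\rho|^2\le |\nabla\rho|^2/\rho$, which lets me absorb $\mathcal{B}K^2|\nabla\rho|^2$ into the $|\nabla\rho|^2/\rho$ term. Adding the two contributions gives
\[|L[\rho]|\le\bigl(\Lambda K^2+\mathcal{B}K^2\bigr)\frac{|\nabla\rho|^2}{\rho}+\Gamma,\]
so $\Theta=K^2(\Lambda+\mathcal{B})$ works.

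There is no real obstacle here: the whole thing is an exercise in the product rule, the constraint $|S|=1$, and two applications of Lemma~\ref{forhelp}. The only point requiring a moment of care is the coordinatewise splitting in Step~2, because the quadratic form acts on inner products of vector-valued quantities $D_iS\in\mathbf R^2$; once one views it as the sum of the scalar forms in each component, the ellipticity bound applies immediately.
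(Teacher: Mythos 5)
Your proof is correct and takes essentially the same route as the paper: a product-rule computation exploiting $|S|\equiv 1$ to obtain the identity \eqref{(00)} (you pair $D_{ij}w$ directly with $S$, while the paper expands $L[w_i]$ and $L[\rho]$ and combines the two expressions, but it is the same manipulation), followed by the ellipticity bound on the quadratic form, Lemma~\ref{forhelp} and \eqref{nabur}, and $\rho\le 1$ to absorb the $\mathcal B$-term, giving a constant $\Theta$ of the same form as the paper's $2K\Lambda+\mathcal B K$ (the harmless discrepancies in the factor of $2$ and powers of $K$ come only from which norm of $\nabla S$ one uses). One small slip: the inequality $|\nabla S|\le K|\nabla\rho|/\rho$ you use is \eqref{edtreta}, not \eqref{ekaterta}.
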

\begin{proof}
Let $w=(w_1,w_2)$ (here $w_i$ are real), $S=(S_1,S_2)$ and let
$f=(f_1,f_2)$. For real differentiable functions $a$ and $b$ define
the bi-linear operator
$$D[a,b] = \sum_{k,l=1}^2 a^{kl}(z)D_ka(z)D_l b(z).$$ Since $w_i=\rho
S_i$, $ i\in \{1,2\}$ and
$$\rho=\sum_{i=1}^2S_iw_i,$$ we obtain
\begin{equation}\label{sec1}
L[w_i]=S_iL[\rho] +\rho L[S_i] +2D[\rho, S_i], \ i \in \{1,2\}
\end{equation}
and
\begin{equation}\label{thir}
L[\rho]=\sum_{i=1}^2 w_iL[S_i]+\sum_{i=1}^2
S_iL[w_i]+2\sum_{i=1}^2D[S_i,w_i].
\end{equation} From (\ref{sec1}) we obtain
\begin{equation}\begin{split}\label{for}
L[\rho]&=L[\rho]|S|^2\\&= \sum_{i=1}^2S_i\cdot S_i L[\rho]
\\&=\sum_{i=1}^2S_iL[w_i]-\rho \sum_{i=1}^2 S_i L[S_i]-2
\sum_{i=1}^2S_iD[\rho,S_i].\end{split}
\end{equation}
By adding (\ref{thir}) and (\ref{for}) we obtain $$L[\rho]
=\sum_{i=1}^2(D[S_i,w_i]-S_i D[\rho,S_i]) +\left<L[w], S\right>.$$
On the other hand
\[\begin{split}D[S_i,w_i]-S_i D[S_i,\rho]&= \sum_{k,l=1}^2 a^{kl}(z)D_kS_i D_l w_i
-S_i \sum_{k,l=1}^2 a^{kl}(z)D_kS_i D_l \rho\\&=\sum_{k,l=1}^2
a^{kl}(z)D_kS_i (\rho D_l S_i+S_i D_l \rho) -S_i \sum_{k,l=1}^2
a^{kl}(z)D_kS_i D_l \rho\\&=\rho \sum_{k,l=1}^2 a^{kl}(z)D_kS_i D_l
S_i, \ \ i=1,2.\end{split}\] Thus \[\begin{split}L[\rho]&= \rho
\sum_{i,k,l=1}^2 a^{kl}(z)D_kS_i D_l S_i +\left<L[w],
S\right>\\&=\rho(a^{11}|p|^2+2a^{12}\left<p,q\right>+a^{22}|q|^2)+\left<L[w],
S\right>,
\end{split}\] where $p=(D_1S_1,D_1 S_2)$ and $q=(D_2 S_1, D_2 S_2)$.
Therefore \[\begin{split}|L[\rho]|&\le \Lambda \rho
(|p|^2+|q|^2)+(\mathcal B|\nabla w|^2+\Gamma)\\&=\Lambda \rho
\|\nabla S\|^2+(\mathcal B|\nabla w|^2+\Gamma),\end{split}\]
provided \eqref{(000)} holds. Here $\|\cdot\|$ is { the}
Hilbert-Schmidt norm which satisfies the inequality $\|P\|\le \sqrt
2 |P|$. If $w$ is $K-$q.c., then according to \eqref{edtreta} and
\eqref{edtreta3} we have
$$|L[\rho]|\le {2K\Lambda }|\nabla \rho|^2\rho^{-1}+(\mathcal B K|\nabla
\rho|^2+\Gamma).$$ Taking $\Theta =  2K \Lambda  + \mathcal B K$ we
obtain \eqref{lll}.
\end{proof}
\begin{lemma}\label{lemah}
If $f=u+iv$ is a $K$ q.c. mapping satisfying elliptic differential
inequality, then $u$ and $v$ satisfy the elliptic differential
inequality.
\end{lemma}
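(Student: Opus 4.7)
The plan is to deduce the differential inequality for $u$ (and symmetrically for $v$) directly from that of $f$, by combining two simple observations: the linearity of $L$, which transfers $L[f]$ to $L[u]$, and $K$-quasiconformality, which lets me bound $|\nabla f|^2$ by $|\nabla u|^2$ up to a constant depending on $K$.

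First I would observe that since the coefficients $a^{ij}$ are real, $L$ is real-linear, so $L[f]=L[u]+iL[v]$, giving $|L[u]|\le|L[f]|$ and $|L[v]|\le|L[f]|$. Using the hypothesis on $f$,
\[|L[u]|\le\mathcal{B}|\nabla f|^2+\Gamma.\]

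Next I would work out how $|\nabla f|^2$ compares with $|\nabla u|^2$ (and $|\nabla v|^2$). A direct computation from the definitions of $f_z$ and $f_{\bar z}$ yields
\[|f_z|^2+|f_{\bar z}|^2=\tfrac12(|\nabla u|^2+|\nabla v|^2),\qquad |f_z|^2-|f_{\bar z}|^2=J_f=u_xv_y-u_yv_x,\]
and by $(a+b)^2\le 2(a^2+b^2)$,
\[|\nabla f|^2=(|f_z|+|f_{\bar z}|)^2\le|\nabla u|^2+|\nabla v|^2.\]
The $K$-quasiconformality $|f_{\bar z}|\le k|f_z|$ with $k=(K-1)/(K+1)$ then forces
\[J_f\ge(1-k^2)|f_z|^2\ge\tfrac{1-k^2}{1+k^2}(|f_z|^2+|f_{\bar z}|^2)=\tfrac{K}{K^2+1}(|\nabla u|^2+|\nabla v|^2).\]
Combining this with Hadamard's inequality $J_f\le|\nabla u|\,|\nabla v|$ yields $K(|\nabla u|^2+|\nabla v|^2)\le(K^2+1)|\nabla u|\,|\nabla v|$; solving this quadratic inequality in the ratio $|\nabla v|/|\nabla u|$ pins that ratio into the interval $[1/K,K]$. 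Hence $|\nabla v|^2\le K^2|\nabla u|^2$, and therefore
\[|\nabla f|^2\le(1+K^2)|\nabla u|^2.\]

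Putting the two steps together,
\[|L[u]|\le \mathcal{B}(1+K^2)|\nabla u|^2+\Gamma,\]
which is an elliptic differential inequality for $u$ with constants $\mathcal{B}'=\mathcal{B}(1+K^2)$ and $\Gamma'=\Gamma$; the argument for $v$ is identical. The only real subtlety is the quasiconformal comparison of $|\nabla u|$ with $|\nabla v|$, and even that is mostly bookkeeping: the degenerate case $|\nabla u|=0$ forces $|\nabla v|=0$ (and so $|\nabla f|=0$), in which case the inequality reduces to $|L[u]|\le\Gamma$, trivially of the required form.
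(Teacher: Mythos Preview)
Your proof is correct and follows the same overall structure as the paper: bound $|L[u]|\le|L[f]|\le\mathcal{B}(|\nabla u|^2+|\nabla v|^2)+\Gamma$, then show $|\nabla v|\le K|\nabla u|$ to absorb $|\nabla v|^2$ into $|\nabla u|^2$, arriving at the same constant $\mathcal{B}(1+K^2)$ (which equals the paper's $\mathcal{B}\bigl(1+\tfrac{(1+k)^2}{(1-k)^2}\bigr)$).

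The only difference is in how the ratio bound $|\nabla v|/|\nabla u|\in[1/K,K]$ is obtained. The paper computes directly
\[
\frac{|\nabla u|^2}{|\nabla v|^2}=\frac{|1+\mu|^2}{|1-\mu|^2},\qquad \mu=\frac{\overline{f_{\bar z}}}{f_z},
\]
and reads off the bound from $|\mu|\le k$. You instead go through the Jacobian: $J_f\ge\tfrac{K}{K^2+1}(|\nabla u|^2+|\nabla v|^2)$ combined with $J_f\le|\nabla u|\,|\nabla v|$ yields a quadratic inequality in the ratio. Both routes are short; the paper's is a one-line identity, while yours trades that identity for two standard inequalities and a quadratic. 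Either way the substance is the same.
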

\begin{proof}
Let $$A:=|\nabla u|^2=2(|u_z|^2+|u_{\bar z}|^2)=\frac
12(|f_z+\overline{f_{\bar z}} |^2+|f_{\bar z}+\overline{f_z}|^2)$$
and $$B:=|\nabla v|^2=2(|v_z|^2+|v_{\bar z}|^2)=\frac
12(|f_z-\overline{f_{\bar z}} |^2+|f_{\bar z}-\overline{f_z}|^2).$$
Then
$$\frac{A}{B}=\frac{|1+\mu|^2}{|1-\mu|^2}$$ where
$\mu={\overline{f_{\bar z}}}/{f_z}$. Since $|\mu|\le
k=\frac{K-1}{K+1}$
\begin{equation}\label{help} \frac{(1-k)^2}{(1+k)^2}\le \frac AB
\le \frac{(1+k)^2}{(1-k)^2}.
\end{equation}
As $$|L[f]|=|L[u]+iL[v]|\le \mathcal B|\nabla f|^2+\Gamma\le
\mathcal B(|\nabla u|^2+|\nabla v|^2)+\Gamma,$$  the relation
(\ref{help}) yields
$$|L[u]|\le \mathcal B\left(1+\frac{(1+k)^2}{(1-k)^2}\right)|\nabla u|^2+\Gamma$$ and
$$|L[v]|\le \mathcal B\left(1+\frac{(1+k)^2}{(1-k)^2}\right)|\nabla v|^2+\Gamma.$$
\end{proof}
Before proving the main { results} of this paper let us recall one
of the most fundamental results concerning quasiconformal mappings.
\begin{proposition}[Mori]
If $w:\mathbf U\to \mathbf U$, $w(0)=0$, is a $K$ quasiconformal
harmonic mapping of the unit disk onto itself, then
$$|w(z_1)-w(z_2)|\le 16|z_1-z_2|^{1/K},\ \ z_1,z_2\in \mathbf U.$$
\end{proposition}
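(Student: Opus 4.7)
This is the classical \emph{Mori distortion theorem}; note that the harmonicity hypothesis plays no role in the proof—only the $K$-quasiconformality and the normalization $w(0)=0$ are used. The plan is the standard proof via quasi-invariance of the conformal modulus of ring domains, combined with sharp bounds for the extremal Gr\"otzsch/Teichm\"uller rings.

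I would first reflect $w$ across the unit circle, setting $\tilde w(z)=1/\overline{w(1/\bar z)}$ for $|z|>1$, so as to extend $w$ to a global $K$-quasiconformal self-homeomorphism of $\widehat{\mathbf C}$ that fixes $0$ and $\infty$ and preserves the unit disk (the extension is still $K$-qc because reflection in the unit circle is an anticonformal involution and the two pieces agree on the boundary). For distinct $z_1,z_2\in\mathbf U$, set $r:=|z_1-z_2|$ and form the doubly connected domain
\[
R:=\widehat{\mathbf C}\setminus\bigl([z_1,z_2]\cup\{|z|\ge 1\}\bigr),
\]
whose $\tilde w$-image is also a ring with outer boundary $\{|z|\ge 1\}$ and inner boundary a continuum joining $w(z_1)$ to $w(z_2)$ in $\overline{\mathbf U}$. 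The fundamental quasi-invariance of modulus yields $\mathrm{mod}\bigl(\tilde w(R)\bigr)\ge K^{-1}\,\mathrm{mod}(R)$.

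To finish, I would bracket the two moduli by sharp extremal-ring bounds: a lower bound $\mathrm{mod}(R)\ge \log\bigl(1/(16r)\bigr)$ coming from the Teichm\"uller extremal ring (after a M\"obius rearrangement placing the segment $[z_1,z_2]$ in standard position), and an upper bound $\mathrm{mod}(\tilde w(R))\le \log\bigl(16/|w(z_1)-w(z_2)|\bigr)$ coming from the Gr\"otzsch extremal ring applied to the continuum joining $w(z_1)$ and $w(z_2)$. Chaining these with the quasi-invariance inequality and exponentiating gives $|w(z_1)-w(z_2)|\le 16\,r^{1/K}$.

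The principal technical obstacle is \emph{tracking the constant $16$}. A soft modulus argument easily produces a qualitative H\"older estimate with exponent $1/K$ and some constant $C=C(K)$, but obtaining the specific universal constant $16$ requires invoking the explicit Gr\"otzsch modulus function $\mu(r)=\tfrac{\pi}{2}\mathcal K(\sqrt{1-r^2})/\mathcal K(r)$ (complete elliptic integrals) and its sharp asymptotic $\mu(r)=\log(4/r)+O(r^2)$ as $r\to 0^+$; the factor $16=4^2$ then emerges naturally upon combining this asymptotic with the reflection symmetry used in the first step. The range where $r$ is not small has to be handled separately by a crude diameter bound, since there $|w(z_1)-w(z_2)|\le 2\le 16\,r^{1/K}$ holds trivially.
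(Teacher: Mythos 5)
The paper itself does not prove this Proposition: it is quoted as the classical Mori theorem (cf.\ \cite{Ahl}, \cite{wang}), so the only issue is whether your sketch stands on its own, and as written it has two genuine gaps. First, the ring you actually estimate never uses the reflection you set up. With $R=\widehat{\mathbf C}\setminus\bigl([z_1,z_2]\cup\{|z|\ge 1\}\bigr)=\mathbf U\setminus[z_1,z_2]$, the claimed lower bound $\mathrm{mod}(R)\ge\log\bigl(1/(16r)\bigr)$ is false: if $z_1,z_2$ lie at distance $\varepsilon$ from the unit circle with $|z_1-z_2|=r$ fixed, the complementary components of $R$ are a continuum of diameter $r$ and a set at distance $\varepsilon$ from it, so the Teichm\"uller estimate gives $\mathrm{mod}(R)\le\tau(\varepsilon/r)\to 0$ as $\varepsilon\to 0$, no matter how small $r$ is. This boundary degeneration is precisely what the classical argument of Mori/Ahlfors removes: one works with the reflected map $\tilde w$ on $\widehat{\mathbf C}$ and with a ring whose \emph{second} complementary continuum is the reflected segment (a continuum through $1/\bar z_1$ and $\infty$), not the whole exterior $\{|z|\ge 1\}$; the reflection is introduced exactly so that the lower modulus bound is uniform up to the boundary, whereas in your sketch the extension is constructed and then never used. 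Second, even granting both of your displayed bounds, chaining them with quasi-invariance gives $\log\bigl(16/|w(z_1)-w(z_2)|\bigr)\ge K^{-1}\log\bigl(1/(16r)\bigr)$, i.e.\ $|w(z_1)-w(z_2)|\le 16\cdot 16^{1/K}\,r^{1/K}$, which is weaker than the stated constant $16$; so the remark that ``the factor $16$ emerges naturally'' is not yet an argument, and extracting exactly $16$ requires the sharper classical bookkeeping with $\mu$ and $\tau$ (e.g.\ $\tau(t)<\log 16(1+t)$ and its matching lower bound) applied to the correctly chosen pair of rings. Note also that your fallback $2\le 16r^{1/K}$ only covers $r\ge 8^{-K}$, so on the remaining range you need the one-sided inequalities for $\mu$, not just the asymptotic $\mu(r)=\log(4/r)+O(r^2)$.

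What is sound in your proposal: the observation that harmonicity is irrelevant (the paper's phrasing is a slip; only $K$-quasiconformality and the normalization are used), the fact that the reflection $\tilde w(z)=1/\overline{w(1/\bar z)}$ yields a $K$-qc self-map of $\widehat{\mathbf C}$, and the choice of modulus quasi-invariance as the engine of the proof. Repair the argument by taking the ring in $\widehat{\mathbf C}$ with complementary continua $[z_1,z_2]$ and the reflected arc through $\infty$, and by redoing the constant-tracking against the sharp Gr\"otzsch/Teichm\"uller inequalities; alternatively, simply cite \cite{Ahl} or \cite{wang}, as the paper does.
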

Mori's theorem for q.c. selfmappings of the unit disk has been
generalized in various directions in the plane and in the space. See
for example, the papers \cite{kotj}, \cite{gema} and \cite{FV}.

%
%
\begin{proof}[Proof of Theorem~\ref{mama}] The idea of the proof is to estimate the gradient of $w$ in some "neighborhood" of the boundary together with some interior estimate
in the rest of the unit disk.  { Put} $\alpha$, $\beta\in\mathbf{R}$
such that $\frac{1+|a|}{2}\le \alpha <1$ and $\beta =
\frac{\alpha+1}{2}$. Define $D_\alpha  = \{z:|z|\le \beta\}$ and
$A_\alpha=\{z:\alpha\le |z|<1\}$.

Let $w= (w_1,w_2)$. According to Theorem~\ref{fid} and
Lemma~\ref{lemah}, there { exist} a constant $C_i$ depending only on
modulus of continuity of $w_i$, $\mathcal B$, $\Gamma$, $K$,
$\Lambda$, $\mathfrak L$ and $\alpha$ such that
\begin{equation}\label{ww}|\nabla w_i(z)|\le C_i, \ \ z\in D_\alpha, i=1,2.\end{equation} By Mori's theorem, the modulus
of continuity of $w_i$ depends only on $K$ and $a$. Thus
\begin{equation}\label{mat}|\nabla w(z)|\le |\nabla w_1|+|\nabla
w_2|\le C_1+C_2=C_3(K,\mathcal B,\Gamma,\Lambda, \mathfrak L,
\alpha),\ \ \ z\in D_\alpha.\end{equation} As $w$ is $K$
quasiconformal selfmapping of the unit disk, by Mori's theorem
(\cite{wang}) it satisfies the inequality:
\begin{equation}\label{mori}
{4^{1-K}}\left|\frac{a-z}{1-z\bar a}\right|^K \le |w(z)|,\ \ |z|<1,
\end{equation}
where $a=w^{-1}(0)$. Let $u = |w|$. From Lemma~\ref{meli} and
\eqref{mori} { we find that}
\begin{equation}\label{afte}|L[u]|\le
 2^{3K-2}\left(\frac{1+|a|}{1-|a|}\right)^K{\Theta}|\nabla u|^2 +\Gamma,
(1+|a|)/2<|z|<1.
\end{equation}
Let $g$ be a function
$$g:A_\alpha\to \mathbf R$$ defined { as}
$$g(z)=\left\{
        \begin{array}{ll}
          1, & \hbox{if $\beta<|z|\le 1$;} \\
          1+(u(z)-1)\frac{\exp\frac{1}{|z|^2-\beta^2}}{\exp\frac{1}{\alpha^2-\beta^2}}, & \hbox{if $\alpha\le|z|\le \beta$.}
        \end{array}
      \right.$$
{ Define} $$\phi(z):
=\frac{\exp\frac{1}{|z|^2-\beta^2}}{\exp\frac{1}{\alpha^2-\beta^2}}.$$
Then
$$L[g]=\left\{
         \begin{array}{ll}
           0, & \hbox{if $\beta<|z|\le 1$;} \\
           (u(z)-1)L[\phi]+\phi L[u]+D[u,\phi], & \hbox{if $\alpha\le|z|\le \beta$.}
         \end{array}
       \right.
$$
Therefore \begin{equation}\label{aka}|L[g]|\le\left\{
         \begin{array}{ll}
           0, & \hbox{if $\beta<|z|\le 1$;} \\
           \mathcal B_1|\nabla u|^2+\Gamma_1, & \hbox{if $\alpha\le|z|\le \beta$,}
         \end{array}
       \right.\end{equation} where $$\mathcal
B_1=2^{3K-2}\left(\frac{1+|a|}{1-|a|}\right)^K\left( 2K \Lambda +
{\mathcal B K}\right)$$ and $\Gamma_1$ is a constant depending only
on $K,$ $\mathcal B,$ $\Gamma$, $\Lambda$, $\mathfrak L$ and
$\alpha$. By \eqref{nabur}, \eqref{mat} and \eqref{aka} we have
\begin{equation}\label{oka}|L[g]|\le C_4(K, \mathcal B, \Gamma,
\Lambda, \mathfrak L, \alpha), \ \ z\in A_\alpha\end{equation} and
\begin{equation}\label{oko}|\nabla g|\le C_5(K, \mathcal B, \Gamma, \Lambda,\mathfrak L, \alpha), \ \ z\in A_\alpha.\end{equation}
Furthermore, by using the inequalities \eqref{afte}, \eqref{oka},
\eqref{oko} and $|a+b|^2\le 2(|a|^2+|b|^2)$, we have
\[\begin{split}|L[u-g]|&\le |L[u]|+|L[g]|\\&\le \mathcal B_1|\nabla
u|^2+C_7(K, \mathcal B, \Gamma, \Lambda, \mathfrak L, \alpha)\\&\le
2\mathcal B_1 |\nabla u-\nabla g|^2+C_8(K, \mathcal B, \Gamma,
\Lambda,\mathfrak L, \alpha),\ \ z\in A_\alpha.\end{split}\]  By
Mori's theorem, there exists a constant $\alpha=\alpha(K,a)<1$ such
that
$$M = \max\{|u(z)-g(z)|: z\in A_\alpha\}$$ is small enough, satisfying the inequality
\begin{equation}\label{bana}\frac{64}{\pi}\cdot 2\mathcal B_1 M \Lambda<1.
\end{equation}
Thus $\tilde u = u-g$ satisfies the conditions of
Theorem~\ref{aprio} in the domain $D=A_\alpha$. The conclusion is
that $\nabla u$ is bounded in $\beta<|z|<1$ by a constant depending
only on $K$, $\mathcal B$, $\Gamma$, $\Lambda$, $\mathfrak L$ and
$a$ and on { the} modulus of continuity of $\tilde u$. From Mori's
theorem, the modulus of continuity of $u$ depends only on $K$ and
$a$. Combining \eqref{oko} with \eqref{nabur}, we obtain
\begin{equation}\label{vuo}|\nabla w|\le C_0(K,\mathcal B, \Gamma,\Lambda, \mathfrak L, a),\
\ \beta<|z|<1.\end{equation} From \eqref{mat} and \eqref{vuo} we
obtain the desired conclusion.
\end{proof}
\subsection*{Acknowledgment} I am thankful to the referee for providing constructive comments and
help in improving the contents of this paper.
\bibliographystyle{amsplain}
\providecommand{\bysame}{\leavevmode\hbox
to3em{\hrulefill}\thinspace}
\providecommand{\MR}{\relax\ifhmode\unskip\space\fi MR }
\providecommand{\MRhref}[2]{%
  \href{http://www.ams.org/mathscinet-getitem?mr=#1}{#2}
} \providecommand{\href}[2]{#2}

\end{document}